\newtheorem{thm}{Theorem}
\newtheorem{lem}[thm]{Lemma}
\newtheorem*{opqq}{Question}
\newtheorem{pro}[thm]{Proposition}
\newtheorem{cor}[thm]{Corollary}
\theoremstyle{remark}
\newtheorem{rem}[thm]{Remark}
\newtheorem{exa}[thm]{Example}
\DeclareMathOperator{\supp}{supp}
\newcommand*{\abold}{\boldsymbol{a}}
\newcommand*{\borel}[1]{\mathfrak{B}(#1)}
\newcommand*{\okr}{\stackrel{\scriptscriptstyle{\mathsf{def}}}{=}}
\newcommand*{\gammab}{\boldsymbol \gamma}
\newcommand*{\Gammab}{\boldsymbol{\varGamma}}
\newcommand*{\cbb}{\mathbb C}
\newcommand*{\D}{\mathrm d}
\newcommand*{\E}{\mathrm e}
\newcommand*{\Ge}{\geqslant}
\newcommand*{\I}{\mathrm i}
\newcommand*{\IM}{\mathrm{Im}}
\newcommand*{\Le}{\leqslant}
\newcommand*{\nfr}{\mathfrak N}
\newcommand*{\nul}{\{0\}}
\newcommand*{\PDE}{\mathsf{PDE}}
\newcommand*{\rbb}{\mathbb R}
\newcommand*{\RE}{\mathrm{Re}}
\newcommand*{\sbold}{\boldsymbol{s}}
\newcommand*{\tbb}{\mathbb T}
\newcommand{\tbold}{\boldsymbol{t}}
\newcommand*{\zbb}{\mathbb Z}
\newcommand*{\zcal}{\mathcal Z}
\begin{document}
   \title[The complex moment problem: determinacy and
extendibility]{The complex moment problem: determinacy
and extendibility}
   \author[D. Cicho\'n, J. Stochel, F.H. Szafraniec]
{Dariusz Cicho\'n, Jan Stochel, Franciszek Hugon
Szafraniec}
   \address{Wydzia\l\ Matematyki i Informatyki, Uniwersytet
Jagiello\'nski, ul.\ {\L}ojasiewicza 6, PL-30348
Kra\-k\'ow}
   \email{Dariusz.Cichon@im.uj.edu.pl}
   \email{Jan.Stochel@im.uj.edu.pl}
   \email{umszafra@cyfronet.pl}
   \thanks{This work was supported by the NCN
(National Science Center), decision No.\
DEC-2013/11/B/ST1/03613}
   \subjclass[2010]{Primary 44A60, 43A35; Secondary
14P05} \keywords{Positive definite sequences, complex
moment problem, Hamburger moment problem, Herglotz
moment problem, real algebraic set.}
   \begin{abstract}
Complex moment sequences are exactly those which admit
positive definite extensions on the integer lattice
points of the upper diagonal half-plane. Here we prove
that the aforesaid extension is unique provided the
complex moment sequence is determinate and its only
representing measure has no atom at $0$. The question
of converting the relation is posed as an open
problem. A partial solution to this problem is
established when at least one of representing measures
is supported in a plane algebraic curve whose
intersection with every straight line passing through
$0$ is at most one point set. Further study concerns
representing measures whose supports are Zariski dense
in $\cbb$ as well as complex moment sequences which
are constant on a family of parallel ``Diophantine
lines''. All this is supported by a bunch of
illustrative examples.
   \end{abstract}
   \maketitle

There are two ways of approaching the complex moment
problem (see \cite{bcr}; for a recent survey of the
complex moment problem see also \cite{Schmu}). One
following an idea due to Marcel Riesz (for
continuation see \cite{hav1,hav2,Kilpi}) and the other
via positive definite extendibility (see
\cite{st-sz1,c-s-s-tr}). As is well-known, positive
definiteness is not sufficient for solving the complex
moment problem (see \cite{Schmu0,bcr}). The present
paper carries on with the study of \cite{st-sz1} which
characterizes solving the complex moment problem by
extending a given sequence defined on the integer
lattice points of the first quarter to a positive
definite sequence on the lattice points of the upper
diagonal half-plane. One may expect a relationship
between the uniqueness of extending sequence on one
hand and the determinacy of the resulting moment
sequence. This question leads to quite a number of
interesting thoughts which are exposed in this paper.
Our results, which are diverse in nature, are
supported by elucidative examples and lead eventually
to an open problem discussed on the final pages of the
paper.
   \section{Introduction}
In this paper $\borel{Z}$ stands for the
$\sigma$-algebra of all Borel subsets of a topological
Hausdorff space $Z$. With the notation
   \begin{align*} \nfr
\okr \{(m,n)\colon m,n \text{ are integers such that }
m\geqslant 0, n\geqslant 0\},
   \\
\nfr_+ \okr \{(m,n)\colon m,n \text{ are integers such
that } m+n\geqslant 0\},
   \end{align*}
we say that a sequence $\gammab =
\{\gamma_{m,n}\}_{(m,n) \in \nfr}\subset \cbb$ is a
{\em complex moment sequence} if there exists a Borel
measure\footnote{\;All measures considered in this
paper are positive. We always tacitly assume that
integrands are absolutely integrable wherever they
appear.} $\mu$ on $\cbb$ such that
   \begin{align} \label{1}
\gamma_{m,n} = \int_\cbb z^m\bar z^n \D \mu(z), \quad
(m,n) \in \nfr;
   \end{align}
recall that $\rbb$ and $\cbb$ stand for the fields of
all real and complex numbers, respectively. We call
the measure $\mu$ a {\em representing measure} for the
sequence $\gammab$. If $\mu$ in \eqref{1} is unique,
then the sequence $\gammab$ is said to be {\em
determinate}\,\footnote{\;This is one of the three
determinacy notions considered in \cite{Fug}.}. As is
easily seen, a necessary condition for $\gammab$ to be
a complex moment sequence is that $\gammab$ is {\em
positive definite on $\nfr$}, that is
   \begin{align*}
\sum_{(m,n), (p,q) \in \nfr}
\lambda_{m,n}\bar\lambda_{p,q} \gamma_{m+q,n+p}
\geqslant 0
   \end{align*}
for every sequence $\{\lambda_{m,n}\}_{(m,n) \in \nfr}
\subset \cbb$ vanishing off a finite set. The above
positive definiteness condition is in general not
sufficient. However, it turns out that complex moment
sequences are exactly those which admit positive
definite extensions on $\nfr_+$ (see \cite[Theorem
1]{st-sz1}). More precisely, $\gammab =
\{\gamma_{m,n}\}_{(m,n) \in \nfr}\subset \cbb$ is a
complex moment sequence if and only if there exists a
sequence $\Gammab = \{\varGamma_{m,n}\}_{(m,n)\in
\nfr_+} \subset \cbb$ which is {\em positive definite
on $\nfr_+$}, that is
   \begin{align*}
\sum_{(m,n), (p,q) \in \nfr_+}
\lambda_{m,n}\bar\lambda_{p,q} \varGamma_{m+q,n+p}
\geqslant 0
   \end{align*}
for every sequence $\{\lambda_{m,n}\}_{(m,n) \in
\nfr_+} \subset \cbb$ vanishing off a finite set, and
which extends $\gammab$, that is
   \begin{align*}
\varGamma_{m,n} = \gamma_{m,n}, \quad (m,n) \in \nfr.
   \end{align*}
Using the notation
   \begin{align*}
\PDE(\gammab) \okr \{\Gammab \colon \Gammab \text{ is
a positive definite extension of } \gammab \text{ on }
\nfr_+\},
   \end{align*}
we can simply rewrite \cite[Theorem 1]{st-sz1} as
follows.
   \begin{thm} \label{nonemp}
A sequence $\gammab = \{\gamma_{m,n}\}_{(m,n) \in
\nfr}\subset \cbb$ is a complex moment sequence if and
only if $\PDE(\gammab) \neq \varnothing$.
   \end{thm}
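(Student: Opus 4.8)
The plan is to prove the two implications separately; the forward one is routine while the converse carries the weight. For ``complex moment sequence $\Rightarrow \PDE(\gammab)\neq\varnothing$'' I would start from a representing measure $\mu$ and isolate its atom at the origin, writing $\mu = c\,\delta_0 + \mu'$ with $c = \mu(\nul)$ and $\mu'(\nul)=0$. Since $|z^m\bar z^n| = |z|^{m+n}$ is $\mu'$-integrable on $\cbb\setminus\nul$ for every $(m,n)\in\nfr_+$, I can set $\varGamma_{m,n} \okr \int_{\cbb\setminus\nul} z^m\bar z^n\,\D\mu'(z) + c\,\kappa(m,n)$, where $\kappa(m,n)=1$ when $m+n=0$ and $\kappa(m,n)=0$ otherwise. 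On $\nfr$ only $(0,0)$ lies on the anti-diagonal $\{m+n=0\}$, and there the added constant $c$ restores the mass stripped from the origin, so $\Gammab$ extends $\gammab$. Positive definiteness follows by splitting the quadratic form: the integral part equals $\int_{\cbb\setminus\nul}\bigl|\sum_{(m,n)}\lambda_{m,n}z^m\bar z^n\bigr|^2\,\D\mu' \Ge 0$ and the atomic part equals $c\,\bigl|\sum_{m+n=0}\lambda_{m,n}\bigr|^2\Ge 0$. The only delicate point is that the atom may not be recorded at $(0,0)$ alone: that would violate multiplicativity across the invertible pair $(1,-1),(-1,1)$, which is exactly why it must be spread over the whole anti-diagonal through $\kappa$.

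For the converse the governing observation is that $\nfr_+$ is a $*$-semigroup under addition with involution $(m,n)^* = (n,m)$, so a positive definite $\Gammab$ is a positive definite function on $\nfr_+$. I would run the GNS construction: equip the space of finitely supported families with $\is{e_{m,n}}{e_{p,q}} = \varGamma_{m+q,n+p}$, quotient by the kernel, and complete to a Hilbert space $\mathcal H$. Shifting indices produces densely defined operators $S\colon e_{m,n}\mapsto e_{m+1,n}$ and $U\colon e_{m,n}\mapsto e_{m+1,n-1}$. Direct computation gives $S^* = T$ (the shift $e_{m,n}\mapsto e_{m,n+1}$), shows that $U$ is invertible and isometric hence extends to a unitary (because $(1,-1)$ is invertible in $\nfr_+$), and yields $A\okr S^*S = SS^*\Ge 0$, so that $S$ is formally normal with $S = UT = US^*$ and $U$ commuting with $A$. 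With $\xi_0 = [e_{0,0}]$ one has $S^m(S^*)^n\xi_0 = [e_{m,n}]$, whence $\is{S^m(S^*)^n\xi_0}{\xi_0} = \varGamma_{m,n} = \gamma_{m,n}$ for $(m,n)\in\nfr$.

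The heart of the matter is to promote the formally normal $S$ to an honest normal operator, i.e.\ to construct a normal extension $N\supseteq S$ with $N^*\supseteq S^*$ on a possibly larger space; its spectral measure $E$ on $\cbb$ then delivers $\mu \okr \is{E(\cdot)\xi_0}{\xi_0}$, and $\gamma_{m,n} = \is{N^m(N^*)^n\xi_0}{\xi_0} = \int_\cbb z^m\bar z^n\,\D\mu$. I expect this to be the principal obstacle, and I would stress that formal normality by itself does \emph{not} force the existence of such $N$. What rescues the argument is the unitary $U$ supplied by the invertibility of $(1,-1)$ in $\nfr_+$: it compactifies the ``angular'' variable $z/\bar z$ and commutes with the ``radial'' positive operator $A$ modelling $|z|^2$, the latter carrying a Hamburger-type moment structure that is always representable. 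One cannot, however, simply take a joint spectral decomposition of the commuting pair $(U,A)$, since $z\mapsto(z/\bar z,|z|^2)$ is two-to-one and so recovers $z$ only up to sign; the phase that interpolates, $z/|z|$, is not itself a shift on $\nfr_+$. Thus $S$ must be extended as an operator in its own right, and it is precisely here that positive definiteness on the enlarged index set $\nfr_+$ — rather than on $\nfr$ alone, which is insufficient — enters decisively. Granting the normal extension, the remainder is bookkeeping with the spectral theorem.
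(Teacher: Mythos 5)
Your forward implication is essentially complete and correct: splitting off the atom at the origin, setting $\varGamma_{m,n}=\int_{\cbb\setminus\nul}z^m\bar z^n\,\D\mu'(z)+c\,\delta_{m+n,0}$, and verifying positive definiteness by the two-term decomposition of the quadratic form (the constraints $m+n\Ge 0$ and $p+q\Ge 0$ correctly force the atomic part onto $m+n=p+q=0$) does produce an element of $\PDE(\gammab)$; in the paper's language you have exhibited the representing pair $(\mu',c\,\delta_1)$ of \eqref{repr1}. For comparison: the paper itself offers no proof of this theorem at all --- it is a verbatim restatement of Theorem 1 of \cite{st-sz1}, whose method (the ``polar decomposition approach'') is exactly what your second half begins to set up.

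That second half, however, contains a genuine gap, and you flag it yourself: everything after ``granting the normal extension'' is conditional on precisely the statement that carries the entire weight of the theorem. The GNS construction, the formal normality of $S$, the unitary $U$ with $S=UT$, and the commuting positive $A$ are correct but formal; the existence of a normal extension $N$ of this particular $S$ is \emph{equivalent} to $\gammab$ being a complex moment sequence (since $\is{S^m\xi_0}{S^n\xi_0}=\varGamma_{m,n}=\gamma_{m,n}$ on $\nfr$, a normal $N\supseteq S$ gives the measure via its spectral measure, and conversely a representing measure gives a normal multiplication operator extending $S$), so ``granting'' it is assuming the conclusion. You even explain why no soft argument can close it --- formally normal operators without normal extensions exist, and the joint spectrum of $(U,A)$ recovers $z$ only up to sign --- but you then stop exactly where the real analysis must begin. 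The ingredients you invoke as the rescue are themselves booby-trapped at the unbounded level: $T$ is only a formal adjoint ($T\subset S^*$, not equality), $A=S^*S=SS^*$ as an identity of self-adjoint operators needs an essential self-adjointness argument, and commutation of $U$ with $A$ on the GNS domain does not by itself give commuting spectral measures (Nelson's example). Disentangling the angular part (a Herglotz problem, always determinate) from the radial part (a Hamburger/Stieltjes-type problem, always solvable) and gluing them back together despite the two-to-one character of $z\mapsto(z/\bar z,|z|^2)$ is the substance of the proof in \cite{st-sz1}; until that construction is supplied, your converse is a correct reduction to the hard step, not a proof of it.
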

The main question of this paper concerns a connection
between the following two statements:
   \begin{enumerate}
\item[(i)] $\gammab$ is a determinate complex moment sequence on $\nfr$,
\item[(ii)] $\PDE(\gammab)$ is a singleton.
   \end{enumerate}
According to Proposition \ref{snu2}, if $\gammab$ has
a representing measure $\mu$ such that $\mu(\{0\}) >
0$, then $\PDE(\gammab)$ is infinite. We will show
that (i) implies (ii) provided the representing
measure of $\gammab$ vanishes at $\{0\}$ (see Theorem
\ref{null}). The implication (ii)$\Rightarrow$(i)
holds when $\gammab$ has a representing measure
supported in a real algebraic set belonging to a
distinguished class of plane algebraic curves (see
Theorem \ref{0notatom} and Corollary \ref{nullab}). It
is an open problem whether (ii) implies (i) in full
generality (see Section \ref{Sec6}).

We recall that in view of \cite[Proposition 6]{st-sz1}
a sequence $\Gammab=\{\varGamma_{m,n}\}_{(m,n)
\in\nfr_+} \subset \cbb$ is positive definite if and
only if there exist two Borel measures $\mu$ on
$\cbb^*\okr \cbb\setminus\nul$ and $\nu$ on $\mathbb
T$ (the unit circle centered at the origin) such that
   \begin{align} \label{repr1}
\varGamma_{m,n} = \int_{\cbb^*} z^m\bar z^n \D\mu(z) +
\delta_{m+n,0} \int_{\mathbb T} z^m\bar z^n \D\nu(z),
\quad (m,n) \in \nfr_+,
   \end{align}
   where $\delta_{k,l} = 1$ if $k=l$ and $\delta_{k,l}
= 0$ otherwise. If \eqref{repr1} holds, we say that
$(\mu,\nu)$ is a {\em representing} pair for
$\Gammab$. If such a pair is unique, then $\Gammab$ is
called {\em determinate}. Depending on circumstances,
we will identify a Borel measure $\mu$ on $\cbb^*$
with a Borel one on $\cbb$ vanishing on $\{0\}$.

In this paper the notation $\gammab =
\{\gamma_{m,n}\}_{(m,n)\in\nfr}$ will be used
interchangeably with $\gammab\colon\nfr\to\cbb$; the
same applies to $\Gammab$ and $\nfr_+$.
   \section{Determinacy from extendibility}
In this section we investigate the interplay between
the determinacy of a complex moment sequence $\gammab$
and special properties of the set $\PDE(\gammab)$
including those related to its cardinality.

As shown Lemma \ref{repr2} a representing measure for
a complex moment sequence can be retrieved from a
representing pair for its positive definite extension.
Below, $\delta_a$ stands for the Dirac measure at $a$
understood as the Borel measure on $\cbb$ of total
mass $1$ at the point $a\in\cbb$.
   \begin{lem}\label{repr2}
Suppose $\gammab$ is a complex moment sequence. Then
the following assertions hold{\em :}
   \begin{enumerate}
   \item[(i)] if $(\mu,\nu)$ is a representing pair for some
$\Gammab\in\PDE(\gammab)$, then $\mu + \nu(\mathbb
T)\delta_0$ is a representing measure for $\gammab$,
   \item[(ii)] if $\mu$ is a representing measure for $\gammab$ and
$\mu(\{0\})=0$, then $(\mu,0)$ is a representing pair
for some $\Gammab \in \PDE(\gammab)$.
   \end{enumerate}
   \end{lem}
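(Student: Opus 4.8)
The plan is to read off everything from the explicit representation \eqref{repr1}, the one elementary fact doing all the work being that a monomial $z^m\bar z^n$ with $m,n\Ge0$ vanishes at the origin exactly when $(m,n)\neq(0,0)$ and equals $1$ when $(m,n)=(0,0)$. This is precisely what isolates the contribution of an atom at $0$, and it is the hinge of both assertions.

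For (i) I would take a representing pair $(\mu,\nu)$ for some $\Gammab\in\PDE(\gammab)$ and set $\rho\okr\mu+\nu(\tbb)\delta_0$, viewed as a Borel measure on $\cbb$ (using the paper's identification of $\mu$ on $\cbb^*$ with a measure on $\cbb$ vanishing at $0$). Restricting \eqref{repr1} to $(m,n)\in\nfr$, where $\varGamma_{m,n}=\gamma_{m,n}$, I would evaluate $\int_\cbb z^m\bar z^n\,\D\rho$ in two cases. When $(m,n)\neq(0,0)$ the factor $\delta_{m+n,0}$ kills the $\nu$-term and the integrand vanishes at $z=0$, so the atom contributes nothing and one is left with $\int_{\cbb^*}z^m\bar z^n\,\D\mu=\gamma_{m,n}$; when $(m,n)=(0,0)$ the integral is $\mu(\cbb^*)+\nu(\tbb)=\gamma_{0,0}$. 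Hence $\rho$ represents $\gammab$, and this direction is essentially bookkeeping.

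For (ii), given a representing measure $\mu$ with $\mu(\nul)=0$, I would define $\Gammab$ outright by \eqref{repr1} with the pair $(\mu,0)$, that is $\varGamma_{m,n}\okr\int_{\cbb^*}z^m\bar z^n\,\D\mu$ for $(m,n)\in\nfr_+$, and then verify three things: that the integrals are meaningful, that $\Gammab$ is positive definite, and that it extends $\gammab$. Positive definiteness is free once the integrals converge, since by \cite[Proposition 6]{st-sz1} any sequence of the shape \eqref{repr1} is positive definite on $\nfr_+$. The extension property holds because for $(m,n)\in\nfr$ one has $\int_{\cbb^*}z^m\bar z^n\,\D\mu=\int_{\cbb}z^m\bar z^n\,\D\mu=\gamma_{m,n}$, the first equality being exactly where $\mu(\nul)=0$ is used.

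The main obstacle is the integrability check in (ii). For $(m,n)\in\nfr_+$ one may have $m<0$ or $n<0$, yet $m+n\Ge0$; since $|z^m\bar z^n|=|z|^{m+n}$ on $\cbb^*$, the integrand stays bounded near the origin (no singularity is created there) and near infinity is governed by the absolute moment $\int|z|^{m+n}\,\D\mu$. So the real task is to show $\mu$ has all finite absolute moments: from $\int|z|^{2k}\,\D\mu=\gamma_{k,k}<\infty$ and the Cauchy--Schwarz interpolation $\int|z|^{2k+1}\,\D\mu\Le(\int|z|^{2k}\,\D\mu)^{1/2}(\int|z|^{2k+2}\,\D\mu)^{1/2}$ one obtains $\int|z|^{j}\,\D\mu<\infty$ for every $j\Ge0$, which is exactly what guarantees that every integral in \eqref{repr1} is absolutely convergent. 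With this secured, both assertions reduce to the computations above.
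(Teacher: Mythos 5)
Your proposal is correct and follows essentially the same route as the paper: part (i) by observing that the $\delta_{m+n,0}$-term coincides with integration against $\nu(\tbb)\delta_0$ on $\nfr$, and part (ii) by defining $\Gammab$ via \eqref{repr1} with $\nu=0$ and invoking the characterization \cite[Proposition 6]{st-sz1}. Your Cauchy--Schwarz argument for the finiteness of $\int_{\cbb^*}|z|^{m+n}\,\D\mu$ merely makes explicit the absolute integrability that the paper's proof (and its standing footnote) leaves tacit, so it is a welcome but not divergent addition.
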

   \begin{proof}
   (i) Since for every $(m,n)\in \nfr$, the second
term in \eqref{repr1} coincides with the integral of
$z^m\bar z^n$ over $\cbb$ with respect to the measure
$\nu(\mathbb T)\delta_0$, we get (i).

   (ii) Since, by our assumption, the function
$\cbb^*\ni z\to z^m\bar z^n\in \cbb$ is absolutely
integrable for every $(m,n)\in \nfr_+$, we can define
the sequence $\Gammab=\{\varGamma_{m,n}\}_{(m,n)
\in\nfr_+}$ by \eqref{repr1} with $\nu=0$. Then
$\Gammab\in \PDE(\gammab)$ and $(\mu,0)$ is a
representing pair for $\Gammab$.
   \end{proof}
From now on $\varphi$ will denote the continuous
function
   \begin{align} \label{odw}
\varphi\colon \tbb \to \tbb, \quad \varphi(z)=z^2,
\quad z \in \tbb.
   \end{align}
If $\gammab$ is a complex moment sequence, then an
extension $\Gammab \in \PDE(\gammab)$ is called {\em
quasi-determinate} if for any two representing pairs
$(\mu_1,\nu_1)$ and $(\mu_2,\nu_2)$ for $\Gammab$ we
have
   \begin{align*}
\mu_1=\mu_2 \quad \text{and} \quad
\nu_1\circ\varphi^{-1} = \nu_2\circ\varphi^{-1},
   \end{align*}
where $\nu_j\circ\varphi^{-1}$ is the transport of the
measure $\nu_j$ via $\varphi$ given by
   \begin{align} \label{dodo1}
(\nu_j\circ\varphi^{-1})(\sigma) \okr
\nu_j(\varphi^{-1}(\sigma)), \quad \sigma \in
\borel{\tbb}, \, j=1,2.
   \end{align}
This notion appears to be very natural as the
following result shows.

Now we are ready to clarify the role played by
determinacy in the question of uniqueness of positive
definite extensions. This is in a sense the basic
statement.
   \begin{thm}\label{semi}
Let $\gammab\colon\nfr\to\cbb$ be a complex moment
sequence. Then the following conditions are
equivalent:
\begin{enumerate}
\item[(i)] $\gammab$ is  determinate,
\item[(ii)] if $(\mu_1,\nu_1)$ and $(\mu_2,\nu_2)$ are representing pairs for $\Gammab_1,\Gammab_2\in\PDE(\gammab)$, respectively, then
$\mu_1=\mu_2$,
\item[(iii)] if $(\mu_1,\nu_1)$ and $(\mu_2,\nu_2)$ are representing pairs for $\Gammab_1,\Gammab_2\in\PDE(\gammab)$, respectively, then
$\mu_1=\mu_2$ and $\nu_1(\mathbb T)=\nu_2(\mathbb T)$.
   \end{enumerate}
Moreover, if {\em (i)} holds, then every
$\Gammab\in\PDE(\gammab)$ is quasi-determinate.
   \end{thm}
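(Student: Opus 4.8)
The plan is to exploit the dictionary between representing measures of $\gammab$ and representing pairs of the elements of $\PDE(\gammab)$ furnished by Lemma \ref{repr2}. On the one hand, by Lemma \ref{repr2}(i) every representing pair $(\mu,\nu)$ for some $\Gammab\in\PDE(\gammab)$ produces the representing measure $\rho\okr\mu+\nu(\tbb)\delta_0$ for $\gammab$, which satisfies $\rho|_{\cbb^*}=\mu$ and $\rho(\nul)=\nu(\tbb)$. On the other hand, starting from an arbitrary representing measure $\rho$ for $\gammab$ I would split off the atom at $0$ and check, using \eqref{repr1} together with \cite[Proposition 6]{st-sz1}, that $(\rho|_{\cbb^*},\nu)$ is a representing pair for some $\Gammab\in\PDE(\gammab)$ whenever $\nu$ is any finite Borel measure on $\tbb$ with $\nu(\tbb)=\rho(\nul)$; the verification that the so-defined $\Gammab$ extends $\gammab$ reduces to separating the case $m+n>0$, where the integrand vanishes at $0$, from the case $m=n=0$. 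This makes the assignment $\rho\mapsto(\rho|_{\cbb^*},\rho(\nul))$ the right bookkeeping device.

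With this dictionary the chain (i)$\Rightarrow$(iii)$\Rightarrow$(ii)$\Rightarrow$(i) is short. For (i)$\Rightarrow$(iii), given pairs $(\mu_j,\nu_j)$ for $\Gammab_j\in\PDE(\gammab)$, the induced measures $\mu_j+\nu_j(\tbb)\delta_0$ both represent $\gammab$, so determinacy forces them to coincide; comparing their restrictions to $\cbb^*$ and their masses at $0$ yields $\mu_1=\mu_2$ and $\nu_1(\tbb)=\nu_2(\tbb)$. The implication (iii)$\Rightarrow$(ii) is immediate. The one genuinely non-formal step is (ii)$\Rightarrow$(i): given two representing measures $\rho_1,\rho_2$, I would form the pairs $(\rho_j|_{\cbb^*},\nu_j)$ as above, apply (ii) to get $\rho_1|_{\cbb^*}=\rho_2|_{\cbb^*}$, and then recover the atoms from the single scalar constraint $\rho_1(\cbb)=\gamma_{0,0}=\rho_2(\cbb)$, which gives $\rho_1(\nul)=\rho_2(\nul)$ and hence $\rho_1=\rho_2$. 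The point to watch is that (ii) controls the measures only off $0$, and it is the finiteness of the total mass $\gamma_{0,0}$ that pins down the remaining atom.

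For the closing assertion I would fix $\Gammab\in\PDE(\gammab)$ and take two representing pairs $(\mu_1,\nu_1)$, $(\mu_2,\nu_2)$ for this \emph{same} $\Gammab$. Equality $\mu_1=\mu_2$ is already granted by the established condition (ii). To handle the circular parts I would test \eqref{repr1} on the diagonal indices $(m,-m)\in\nfr_+$, $m\in\zbb$: since $\bar z=z^{-1}$ on $\tbb$, the integrand $z^m\bar z^{-m}$ collapses to $z^{2m}=\varphi(z)^m$, so the $\mu$-contributions cancel and the transport formula \eqref{dodo1} gives
\begin{align*}
\int_{\tbb} w^m\,\D(\nu_1\circ\varphi^{-1})(w)=\int_{\tbb} w^m\,\D(\nu_2\circ\varphi^{-1})(w),\quad m\in\zbb.
\end{align*}
Thus $\nu_1\circ\varphi^{-1}$ and $\nu_2\circ\varphi^{-1}$ have identical trigonometric moments, and since a finite Borel measure on $\tbb$ is determined by its moments (density of trigonometric polynomials in $C(\tbb)$), they agree, which is exactly quasi-determinacy. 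The main obstacle, and the conceptual heart of the theorem, is precisely this last computation: it explains why $\gammab$ alone sees $\nu$ only through its total mass (the atom at $0$), whereas a fixed extension $\Gammab$ sees it through the pushforward $\nu\circ\varphi^{-1}$, so that determinacy of $\gammab$ cannot force $\nu_1=\nu_2$, only the weaker conclusions recorded in (iii) and in the notion of quasi-determinacy.
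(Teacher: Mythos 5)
Your proof is correct and follows essentially the same route as the paper's: the same dictionary from Lemma \ref{repr2} drives (i)$\Rightarrow$(iii) and (ii)$\Rightarrow$(i) (the paper uses the specific choice $\nu_j=t_j\delta_1$ where you allow any $\nu_j$ of the right total mass, an immaterial difference), and the ``moreover'' part is proved identically via the indices $(m,-m)$, the transport formula, and the determinacy of the Herglotz moment problem. No gaps.
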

   \begin{proof}
(i)$\Rightarrow$(iii). The determinacy of $\gammab$
and Lemma \ref{repr2} yield
   \begin{align*}
\mu_1 + \nu_1(\mathbb T)\delta_0 = \mu_2 + \nu_2(\mathbb T)\delta_0.
   \end{align*}
By the definition of a representing pair for an
element of $\PDE(\gammab)$, both measures $\mu_1$ and
$\mu_2$ have no atom at $0$. Hence, it follows that
$\mu_1=\mu_2$ and $\nu_1(\mathbb T)=\nu_2(\mathbb T)$.

(iii)$\Rightarrow$(ii). Obvious.

(ii)$\Rightarrow$(i). Suppose that $\lambda_1$ and
$\lambda_2$ are representing measures for $\gammab$.
Note that the measures $\lambda_1$ and $\lambda_2$
have the following unique decompositions
   \begin{align} \label{lam2r}
\lambda_1=\mu_1+t_1\delta_0\quad \text{and} \quad
\lambda_2=\mu_2+t_2\delta_0,
   \end{align}
where $\mu_1$ and $\mu_2$ are Borel measures on
$\cbb^*$ and $t_1,t_2$ are nonnegative real numbers.
Then one can verify that $(\mu_1,t_1\delta_1)$ and
$(\mu_2,t_2\delta_1)$ are representing pairs for some
$\Gammab_1, \Gammab_2 \in \PDE(\gammab)$,
respectively. It follows from (ii) that $\mu_1=\mu_2$,
and thus
   \begin{align*}
t_j \overset{\eqref{lam2r}}= \lambda_j(\cbb) -
\mu_j(\cbb^*) = \gamma_{0,0} - \mu_1(\cbb^*), \quad
\quad j=1,2,
   \end{align*}
which, by \eqref{lam2r} again, implies that
$\lambda_1=\lambda_2$.

To prove the ``moreover'' part assume that (i) holds.
Let $(\mu_1,\nu_1)$ and $(\mu_2,\nu_2)$ be
representing pairs for the same positive definite
extension of $\gammab$ on $\nfr_+$. Then, by (ii),
$\mu_1=\mu_2$, and consequently, by \eqref{repr1} with
$m+n=0$, we have
   \begin{align*}
\int_{\mathbb T} z^{2m} \D\nu_1(z) = \int_{\mathbb T}
z^{2m} \D\nu_2(z),\quad m=0,\pm 1, \pm 2, \dots.
   \end{align*}
Applying the measure transport theorem yields
   \begin{align*}
\int_{\mathbb T} z^m \D\nu_1 \circ \varphi^{-1}(z) =
\int_{\mathbb T} z^m \D\nu_2 \circ \varphi^{-1} (z),
\quad m=0,\pm 1, \pm 2, \dots.
   \end{align*}
Hence, by the determinacy of Herglotz moment problem
(see Section \ref{Sec3}), $\nu_1 \circ
\varphi^{-1}=\nu_2 \circ \varphi^{-1}$, which
completes the proof.
   \end{proof}
   \begin{cor} \label{snu}
Let $\gammab\colon \nfr\to\cbb$ be a determinate
complex moment sequence. Then for every nonzero finite
Borel measure $\nu$ on $\mathbb T$, there exists a
unique triplet $(\mu,s,\Gammab)$ such that $\mu$ is a
Borel measure on $\cbb^*$, $s$ is a nonnegative real
number, $\Gammab\in\PDE(\gammab)$ and $(\mu,s\nu)$ is
a representing pair for $\Gammab$.
\end{cor}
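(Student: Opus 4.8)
The plan is to prove existence and uniqueness separately, with the determinacy hypothesis entering only at the uniqueness stage through Theorem~\ref{semi}. For existence, since $\gammab$ is a complex moment sequence, Theorem~\ref{nonemp} furnishes some $\Gammab_0\in\PDE(\gammab)$; let $(\mu_0,\nu_0)$ be a representing pair for it. I would set $\mu\okr\mu_0$, put $t\okr\nu_0(\tbb)$, and define $s\okr t/\nu(\tbb)$. This is legitimate because $\nu\neq 0$ forces $\nu(\tbb)>0$, so $s\Ge 0$. I would then define $\Gammab=\{\varGamma_{m,n}\}_{(m,n)\in\nfr_+}$ by \eqref{repr1} applied to the pair $(\mu,s\nu)$; the requisite integrals converge since they already do for $(\mu_0,\nu_0)$. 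By \cite[Proposition~6]{st-sz1} the resulting $\Gammab$ is positive definite on $\nfr_+$, and $(\mu,s\nu)$ is a representing pair for it by construction.

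The one point needing verification is that this new $\Gammab$ still extends $\gammab$. Here I would exploit the factor $\delta_{m+n,0}$ in \eqref{repr1}: passing from $\nu_0$ to $s\nu$ alters $\varGamma_{m,n}$ only at points with $m+n=0$, and the only such point lying in $\nfr$ is $(0,0)$. Since $s\nu(\tbb)=t=\nu_0(\tbb)$, the two $\tbb$-integrals defining $\varGamma_{0,0}$ agree, so $\varGamma_{0,0}=\gamma_{0,0}$; and for $(m,n)\in\nfr$ with $m+n>0$ the $\tbb$-term is absent and $\varGamma_{m,n}$ reduces to the $\mu$-integral, which equals the corresponding entry of $\Gammab_0$ and hence $\gamma_{m,n}$. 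Thus $\Gammab\in\PDE(\gammab)$, establishing existence.

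For uniqueness, I would suppose $(\mu',s',\Gammab')$ is a second admissible triplet, so that $(\mu',s'\nu)$ is a representing pair for some $\Gammab'\in\PDE(\gammab)$. Since $\gammab$ is determinate, Theorem~\ref{semi} applies, and comparing the pairs $(\mu,s\nu)$ and $(\mu',s'\nu)$ via condition~(iii) yields $\mu'=\mu$ together with $s'\nu(\tbb)=s\nu(\tbb)$; dividing by $\nu(\tbb)>0$ gives $s'=s$. Finally, \eqref{repr1} shows that $\Gammab'$ is completely determined by its representing pair $(\mu',s'\nu)=(\mu,s\nu)$, whence $\Gammab'=\Gammab$, and the triplet is unique.

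I expect the only genuine subtlety to be the verification in the second paragraph that the modified sequence remains an extension of $\gammab$. The key observation that renders it routine is that, among all the extension constraints, only the $(0,0)$ entry sees the $\tbb$-component of the representing pair, and it sees it solely through the total mass on $\tbb$, which is preserved by the choice of $s$. Everything else is bookkeeping via \cite[Proposition~6]{st-sz1}, Lemma~\ref{repr2}, and Theorem~\ref{semi}.
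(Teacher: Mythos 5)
Your proof is correct and follows essentially the same route as the paper: in both, existence comes from producing a pair $(\mu,s\nu)$ whose circle component has the right total mass (so that only $\gamma_{0,0}$ is affected, via $\delta_{m+n,0}$ in \eqref{repr1}), and uniqueness is exactly Theorem \ref{semi}. The only cosmetic difference is the entry point: the paper builds the pair directly from a representing measure $\varrho$ of $\gammab$, setting $\mu=\varrho-\varrho(\{0\})\delta_0$ and $s=\varrho(\{0\})/\nu(\tbb)$, whereas you obtain the same pair by swapping the circle component of a representing pair of an arbitrary extension furnished by Theorem \ref{nonemp}; by Lemma \ref{repr2}(i) the two constructions coincide.
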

\begin{proof}
Let $\varrho$ be a representing measure for $\gammab$.
Set $s=\varrho(\{0\})/\nu(\tbb)$ and $\mu=\varrho -
\varrho(\{0\}) \delta_0$. It is easily seen that
$(\mu,s\nu)$ is a representing pair for some $\Gammab
\in \PDE(\gammab)$ (cf.\ \eqref{repr1}). The
uniqueness is a direct consequence of Theorem
\ref{semi}.
\end{proof}
One may illustrate Corollary \ref{snu} by considering
particular choices, extreme in a sense, of the measure
$\nu$: $\nu$ being the Lebesgue measure on $\mathbb T$
or with $\nu$ being the Dirac measure $\delta_1$ on
$\tbb$ at $1$.
   \begin{pro}\label{snu2}
Let $\gammab\colon \nfr\to\cbb$ be a complex moment
sequence which has a representing measure $\mu$ such
that
   \begin{align} \label{munul}
\mu(\nul)> 0.
   \end{align}
Then the cardinality of $\PDE(\gammab)$ is equal to
$\boldsymbol{\mathfrak c}$.
   \end{pro}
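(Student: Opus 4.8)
The plan is to produce a continuum of extensions that differ only in their $\tbb$-component and then to check that distinct choices yield distinct extensions. First I would peel off the atomic part: set $t \okr \mu(\nul) > 0$ and let $\mu_0$ be the restriction of $\mu$ to $\cbb^*$, so that $\mu = \mu_0 + t\delta_0$. For an arbitrary finite Borel measure $\nu$ on $\tbb$ with $\nu(\tbb)=t$, formula \eqref{repr1} applied to the pair $(\mu_0,\nu)$ defines, by \cite[Proposition 6]{st-sz1}, a sequence $\Gammab^\nu$ that is positive definite on $\nfr_+$. I would then verify that $\Gammab^\nu$ extends $\gammab$: for $(m,n)\in\nfr$ with $m+n>0$ the Kronecker term vanishes and $z^m\bar z^n$ vanishes at $0$, whence $\varGamma^\nu_{m,n}=\int_{\cbb^*}z^m\bar z^n\,\D\mu_0=\gamma_{m,n}$, while for $(m,n)=(0,0)$ one gets $\varGamma^\nu_{0,0}=\mu_0(\cbb^*)+\nu(\tbb)=\mu_0(\cbb^*)+t=\mu(\cbb)=\gamma_{0,0}$, the normalization $\nu(\tbb)=t$ being exactly what is needed. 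Hence $\Gammab^\nu\in\PDE(\gammab)$ for every such $\nu$.

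Next I would determine when two such extensions coincide. Since $\mu_0$ is fixed, $\Gammab^{\nu_1}$ and $\Gammab^{\nu_2}$ can differ only in the entries with $m+n=0$, where on $\tbb$ one has $z^m\bar z^n=z^{2m}$. Thus $\Gammab^{\nu_1}=\Gammab^{\nu_2}$ forces $\int_\tbb z^{2m}\,\D\nu_1=\int_\tbb z^{2m}\,\D\nu_2$ for all $m\in\zbb$; by the measure transport theorem these are the Fourier coefficients of $\nu_1\circ\varphi^{-1}$ and $\nu_2\circ\varphi^{-1}$, so completeness of the trigonometric system yields $\nu_1\circ\varphi^{-1}=\nu_2\circ\varphi^{-1}$. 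Consequently the assignment $\nu\mapsto\Gammab^\nu$ separates measures with distinct pushforwards under $\varphi$.

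Finally I would count. For the lower bound, to each $w\in\tbb$ I associate a square root $\zeta_w\in\tbb$ (so $\zeta_w^2=w$) and put $\nu_w\okr t\delta_{\zeta_w}$; then $\nu_w(\tbb)=t$ and $\nu_w\circ\varphi^{-1}=t\delta_w$, and these pushforwards are pairwise distinct as $w$ ranges over $\tbb$. By the preceding paragraph the extensions $\Gammab^{\nu_w}$ are then pairwise distinct, so $\PDE(\gammab)$ has at least $\boldsymbol{\mathfrak c}$ elements. For the upper bound, every element of $\PDE(\gammab)$ is a function $\nfr_+\to\cbb$ and $\nfr_+$ is countable, whence $|\PDE(\gammab)|\Le\boldsymbol{\mathfrak c}^{\aleph_0}=\boldsymbol{\mathfrak c}$. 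Combining the two bounds gives the claim. The step deserving the most care is the coincidence criterion of the second paragraph: the reduction to equality of pushforwards rests on the transport identity together with the fact that the full family of integer Fourier coefficients determines a finite measure on $\tbb$, and this is where I would be most attentive.
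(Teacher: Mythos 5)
Your proof is correct, but it reaches the continuum bound by a different mechanism than the paper. Both arguments begin the same way: strip the atom, writing $\mu=\mu_0+t\delta_0$ with $t=\mu(\nul)>0$, and build extensions via \eqref{repr1} from pairs $(\mu_0,\nu)$ with $\nu$ a measure on $\tbb$ of total mass $t$. The paper, however, exhibits only \emph{two} such extensions, coming from $\nu=t\delta_1$ and $\nu=t\delta_{\I}$, checks that they differ by inspecting the entries with $m+n=0$ and $m$ odd, and then gets continuum many elements for free from convexity: every convex combination $\lambda\Gammab_1+(1-\lambda)\Gammab_2$, $\lambda\in[0,1]$, lies in $\PDE(\gammab)$, and these are pairwise distinct once $\Gammab_1\neq\Gammab_2$. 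You instead exhibit a continuum of extensions directly, indexed by $w\in\tbb$ via $\nu_w=t\delta_{\zeta_w}$, and prove pairwise distinctness by showing that equal extensions force equal pushforwards $\nu_1\circ\varphi^{-1}=\nu_2\circ\varphi^{-1}$, invoking the determinacy of the Herglotz moment problem. That separation step is sound, and all the supporting details (integrability over $\cbb^*$, the extension check at $(0,0)$ using the normalization $\nu(\tbb)=t$, the computation $\nu_w\circ\varphi^{-1}=t\delta_w$, the upper bound via $\PDE(\gammab)\subset\cbb^{\nfr_+}$) are right; but note that your separation argument is in effect the paper's Lemma \ref{chmura2} and the quasi-determinacy part of Theorem \ref{semi}, which you re-derive from scratch, and it is heavier machinery than the problem requires—the convexity trick needs only that two extensions differ in a single entry. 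What your route buys in exchange is a little more information: an explicit injection of $\tbb$ into $\PDE(\gammab)$ whose images arise from genuinely different representing pairs, rather than a mere segment of convex combinations of two of them.
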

   \begin{proof}
Set $\alpha=\mu(\nul)$. It is a simple matter to
verify that the sequences $\Gammab_1$ and $\Gammab_2$
on $\nfr_+$ defined via \eqref{repr1} by the pairs
$(\mu - \alpha \delta_0, \alpha\delta_1)$ and $(\mu-
\alpha \delta_0, \alpha \delta_{\I})$, respectively,
are both in $\PDE(\gammab)$; here $\I = \sqrt{-1}$.
Then any convex combination of $\Gammab_1$ and
$\Gammab_2$ is in $\PDE(\gammab)$. Using \eqref{repr1}
with $m+n=0$, where $m$ is odd, we deduce that
$\Gammab_1 \neq \Gammab_2$. As a consequence, the
cardinality of $\PDE(\gammab)$ is at least
$\boldsymbol{\mathfrak c}$. On the other hand, because
$\PDE(\gammab) \subset \cbb^{\nfr_+}$ and the
cardinality of $\cbb^{\nfr_+}$ is
$\boldsymbol{\mathfrak c}$, we conclude that the
cardinality of $\PDE(\gammab)$ is equal to~
$\boldsymbol{\mathfrak c}$.
   \end{proof}
It is clear that in the case of a determinate complex
moment sequence the zero may or may not be an atom of
its representing measure and both instances can occur
(e.g., the sequences $(1,0,0,\ldots)$ and
$(1,1,1,\ldots)$ are complex moment sequences with the
representing measures $\delta_0$ and $\delta_1$,
respectively). One can ask whether the same is true
for indeterminate complex moment sequences. This
question is answered in the affirmative in Section
\ref{Sec3} (see Proposition \ref{Ham-c} and the
subsequent parts).

If $\gammab\colon \nfr\to\cbb$ is determinate complex
moment sequence and $\mu$ is its representing measure
such that $\mu(\nul)=0$, then $\PDE(\gammab)$ is a
single point set, as the following Theorem shows.
   \begin{thm}\label{null}
Let $\gammab$ be a complex sequence defined on $\nfr$.
Then the following statements are equivalent{\em :}
   \begin{enumerate}
   \item[(i)] $\gammab$ is a determinate complex
moment sequence with a representing measure $\mu$ such
that $\mu(\nul)=0$,
   \item[(ii)] $\PDE(\gammab) = \{\Gammab\}$ and $\Gammab$
has the property that $\mu_1=\mu_2$ whenever
$(\mu_1,0)$ and $(\mu_2,0)$ are representing pairs for
$\Gammab$,
   \item[(iii)] $\PDE(\gammab) = \{\Gammab\}$ and $\Gammab$
is determinate.
   \end{enumerate}
Moreover, if $\Gammab$ is as in {\em (iii)}, then
$(\mu,0)$ is its representing pair, where $\mu$ is as
in~{\em (i)}.
   \end{thm}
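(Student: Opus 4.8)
The plan is to prove the cyclic chain (i)$\Rightarrow$(iii)$\Rightarrow$(ii)$\Rightarrow$(i) and to read off the ``moreover'' clause from the first implication. For (i)$\Rightarrow$(iii) I would start from the hypothesis that $\gammab$ is determinate with a representing measure $\mu$ satisfying $\mu(\nul)=0$. Lemma~\ref{repr2}(ii) then supplies a distinguished element $\Gammab_0\in\PDE(\gammab)$ for which $(\mu,0)$ is a representing pair. The crux is to show this is the \emph{only} element of $\PDE(\gammab)$: taking an arbitrary $\Gammab\in\PDE(\gammab)$ with representing pair $(\mu',\nu')$, the determinacy of $\gammab$ lets me invoke Theorem~\ref{semi}(iii) for the pairs $(\mu,0)$ and $(\mu',\nu')$, forcing $\mu'=\mu$ and $\nu'(\tbb)=0$; since $\nu'$ is a positive measure, $\nu'=0$. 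Feeding $(\mu,0)$ back into \eqref{repr1} shows $\Gammab=\Gammab_0$, so $\PDE(\gammab)=\{\Gammab_0\}$. The determinacy of $\Gammab_0$ follows verbatim: any representing pair $(\mu'',\nu'')$ for $\Gammab_0$ satisfies $\mu''=\mu$ and $\nu''(\tbb)=0$ by Theorem~\ref{semi}, whence $\nu''=0$, so the representing pair is forced to be $(\mu,0)$. This simultaneously yields the ``moreover'' statement.

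The implication (iii)$\Rightarrow$(ii) is immediate, since the uniqueness of the representing pair for $\Gammab$ entails in particular that $\mu_1=\mu_2$ whenever $(\mu_1,0)$ and $(\mu_2,0)$ are representing pairs for $\Gammab$. For (ii)$\Rightarrow$(i) I would first note that $\PDE(\gammab)\neq\varnothing$, so $\gammab$ is a complex moment sequence by Theorem~\ref{nonemp}. If some representing measure $\lambda$ for $\gammab$ had $\lambda(\nul)>0$, then Proposition~\ref{snu2} would make $\PDE(\gammab)$ of cardinality $\boldsymbol{\mathfrak c}$, contradicting that it is a singleton; hence every representing measure vanishes at $\nul$, and in particular $\gammab$ has such a measure. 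To obtain determinacy, take two representing measures $\lambda_1,\lambda_2$; both vanish at $\nul$, so Lemma~\ref{repr2}(ii) makes $(\lambda_1,0)$ and $(\lambda_2,0)$ representing pairs for elements of $\PDE(\gammab)=\{\Gammab\}$, i.e.\ for $\Gammab$ itself. The property assumed in (ii) then gives $\lambda_1=\lambda_2$.

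I expect the only real obstacle to be the first implication, precisely at the point where one wants a \emph{singleton} rather than mere equality of the $\mu$-parts: Theorem~\ref{semi} alone controls $\mu$ and only the total mass $\nu(\tbb)$, not $\nu$ as a whole. The idea that unlocks it is that the hypothesis $\mu(\nul)=0$ lets one work with the canonical pair $(\mu,0)$, against which ``equal total $\nu$-mass'' degenerates to ``$\nu(\tbb)=0$'', hence $\nu=0$; this extra rigidity is exactly what pins down both the sequence and its representing pair.
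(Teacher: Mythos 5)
Your proposal is correct and follows essentially the same route as the paper: the same cycle (i)$\Rightarrow$(iii)$\Rightarrow$(ii)$\Rightarrow$(i), with Theorem~\ref{nonemp}, Theorem~\ref{semi}, Lemma~\ref{repr2} and Proposition~\ref{snu2} deployed at the same points. The only cosmetic difference is in (i)$\Rightarrow$(iii), where you compare an arbitrary representing pair against the canonical pair $(\mu,0)$ via Theorem~\ref{semi}(iii), while the paper equivalently derives $\nu_1(\tbb)=\nu_2(\tbb)=0$ from Lemma~\ref{repr2}(i) together with the hypothesis $\mu(\nul)=0$.
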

   \begin{proof}
(i)$\Rightarrow$(iii) Note that $\PDE(\gammab) \neq
\varnothing$ because, by Lemma \ref{repr2}, $(\mu,0)$
is a representing pair for some $\Gammab \in
\PDE(\gammab)$. Let $(\mu_1,\nu_1)$ and
$(\mu_2,\nu_2)$ be representing pairs for some
extensions $\Gammab_1,\Gammab_2\in\PDE(\gammab)$,
respectively. It follows from Theorem~\ref{semi} that
$\mu_1=\mu_2$. In turn, Lemma~\ref{repr2} yields
   \begin{align*}
\mu=\mu_1 + \nu_1(\mathbb T)\delta_0 = \mu_2 +
\nu_2(\mathbb T)\delta_0,
   \end{align*}
which by our assumption $\mu(\nul)=0$ gives
$\nu_1(\mathbb T)=\nu_2(\mathbb T)=0$. Thus the pairs
$(\mu_1,\nu_1)$ and $(\mu_2,\nu_2)$ are equal to
$(\mu,0)$. As a consequence, $\PDE(\gammab) =
\{\Gammab\}$, the extension $\Gammab$ is determinate
and $(\mu,0)$ is a representing pair for $\Gammab$.

(iii)$\Rightarrow$(ii) Obvious.

(ii)$\Rightarrow$(i) It follows from Theorem
\ref{nonemp} that $\gammab$ is a complex moment
sequence. According to Proposition \ref{snu2}, if
$\mu$ is a representing measure for $\gammab$, then
$\mu(\nul) = 0$, and consequently, by Lemma
\ref{repr2}, $(\mu,0)$ is a representing pair for
$\Gammab$. Hence, by the property of $\Gammab$ assumed
in (ii), the complex moment sequence $\gammab$ is
determinate. This completes the proof.
   \end{proof}
   \section{\label{Sec3} Special classes of complex
moment sequences} In the previous section the question
of the cardinality of $\PDE(\gammab)$ was successfully
answered except for the case of an indeterminate
complex moment sequence $\gammab$ which has no
representing measure with atom at $0$. The question
arises as to whether such a $\gammab$ may exist. It is
well-known that every indeterminate Hamburger moment
sequence has a representing measure with an atom at
$0$ (see \cite[Theorem 2.13]{Sh-Tam}). It turns out
that in the case of complex moment sequences this does
not have to be the case (see Example \ref{no-atom}).

We begin by stating a well-known fact on supports of
representing measures of a complex moment sequence
having at least one representing measure supported in
a real algebraic set. In what follows, $\supp \mu$
stands for the closed support of a finite Borel
measure on a metric space (see \cite[Theorem
II.2.1]{Part}). In this paper, by ``support'' we
always mean ``closed support''. Recall that a set
$A\subset \cbb$ is called a {\em real algebraic set}
if $A=\zcal_{p}$ for some $p\in \cbb[z,\bar z]$, where
   \begin{align*}
\zcal_{p} \okr \{z\in\cbb\colon p(z,\bar z)=0\}
   \end{align*}
and $\cbb[z,\bar z]$ stands as usual for the ring of
all polynomials in two indeterminates with complex
coefficients.
   \begin{pro}\label{inv_supp}
If a complex moment sequence $\gammab$ has a
representing measure supported in a real algebraic
subset $A$ of $\cbb$, then all the other representing
measures for $\gammab$ are also supported in $A$.
\end{pro}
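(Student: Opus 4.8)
The plan is to reduce the claim to the single assertion that any representing measure $\nu$ for $\gammab$ satisfies $\nu(\cbb\setminus A)=0$. Since $A=\zcal_p$ is the zero set of the continuous function $z\mapsto p(z,\bar z)$, it is closed, and for a finite Borel measure on a metric space the condition $\nu(\cbb\setminus A)=0$ is equivalent to $\supp\nu\subset A$ (the closed support being the smallest closed set of full measure). Thus it suffices to transfer the vanishing of $p$ from the given measure to an arbitrary one.

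The key observation is that integrating a \emph{fixed} Hermitian square against a representing measure produces a number that depends only on $\gammab$, not on the measure. Write $p(z,\bar z)=\sum_{(m,n)} c_{m,n}z^m\bar z^n$ with finitely many nonzero coefficients. Then $|p(z,\bar z)|^2=\sum_{(m,n),(k,l)} c_{m,n}\bar c_{k,l}\, z^{m+l}\bar z^{n+k}$, a finite combination of monomials $z^a\bar z^b$ with $(a,b)\in\nfr$. Consequently, for \emph{every} representing measure $\rho$ of $\gammab$ one has the identity
\begin{align*}
\int_\cbb |p(z,\bar z)|^2\,\D\rho(z)=\sum_{(m,n),(k,l)} c_{m,n}\bar c_{k,l}\,\gamma_{m+l,n+k},
\end{align*}
whose right-hand side is manifestly independent of $\rho$ (it is nothing but the positive-definiteness quadratic form evaluated at $\lambda_{m,n}=c_{m,n}$).

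With this in hand the proof is immediate. Let $\mu$ be the representing measure with $\supp\mu\subset A$; then $p(z,\bar z)=0$ for $\mu$-almost every $z$, so the left-hand side above vanishes for $\rho=\mu$, and therefore the common value of the right-hand side is $0$. Applying the identity to any other representing measure $\nu$ gives $\int_\cbb|p(z,\bar z)|^2\,\D\nu=0$, and since the integrand is nonnegative we conclude $p(z,\bar z)=0$ for $\nu$-almost every $z$, i.e.\ $\nu(\cbb\setminus A)=0$. By the reduction of the first paragraph, $\supp\nu\subset A$, as desired.

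The only point requiring care---and the main, though routine, obstacle---is the justification that all the integrals are absolutely convergent, so that the displayed identity is legitimate. This follows because each $\rho$ is a finite measure (of total mass $\gamma_{0,0}$) and every monomial modulus $|z|^{a+b}$ is dominated by $1+|z|^{2\lceil(a+b)/2\rceil}$, whose $\rho$-integral is controlled by the diagonal moments $\gamma_{0,0}$ and $\gamma_{k,k}$; in particular each $z^a\bar z^b$ with $(a,b)\in\nfr$ is $\rho$-integrable with integral $\gamma_{a,b}$, in accordance with the standing integrability convention.
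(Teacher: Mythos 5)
Your proof is correct and follows essentially the same route as the paper's: both arguments integrate $|p(z,\bar z)|^2$ against an arbitrary representing measure, observe that this integral is determined solely by the moments $\gamma_{m,n}$ and hence equals the corresponding integral against the measure supported in $A=\zcal_p$, which is $0$, and conclude that every representing measure gives zero mass to $\cbb\setminus A$. The details you add (the explicit monomial expansion of $|p|^2$, the integrability check, and the closedness of $A$) are precisely what the paper's one-line display leaves implicit.
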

\begin{proof}
If $A=\zcal_{p}$, where $p\in \cbb[z,\bar z]$, $\mu_1$
and $\mu_2$ are representing measures for $\gammab$
and $\mu_1$ is supported in $A$, then
   \begin{align*}
\int_\cbb |p(z,\bar z)|^2\D\mu_2(z) = \int_\cbb
|p(z,\bar z)|^2\D\mu_1(z) = \int_A |p(z,\bar
z)|^2\D\mu_1(z)=0,
   \end{align*}
which implies that $\mu_2(\cbb\setminus A)=0$. This
completes the proof.
   \end{proof}
Necessary and sufficient conditions for a complex
moment sequence to have a representing measure
supported in a given plane algebraic curve were given
in \cite{Sto,st-sz0}. Below we discuss the interplay
between a linear Diophantine relation imposed on
indices of a complex moment sequence and supports of
its representing measures. Given integers $k$ and $l$
such that $k\Ge 0$, a sequence
$\gammab=\{\gamma_{m,n}\}_{(m,n)\in\nfr}\subset \cbb$
is called {\em $(k,l)$-flat} if the following
condition holds
   \begin{gather} \notag
   \gamma_{m,n} = \gamma_{m',n'} \text{ whenever }
(m,n),(m',n')\in\nfr \text{ and }
   \\ \label{kaka}
   km+ln=km'+ln'.
   \end{gather}
Intuitively speaking, the $(k,l)$-flatness of
$\gammab$ means that the sequence $\gammab$ is
constant on each ``Diophantine line''
$\{(m,n)\in\zbb^2 \colon km+ln=c\}$ with $c\in\zbb$
(as usual $\zbb$ stands for the set of all integers).
   \begin{thm}\label{unif}
If $\gammab=\{\gamma_{m,n}\}_{(m,n)\in\nfr}$ is a
$(k,l)$-flat complex moment sequence for some $(k,l)$
and $\mu$ is its representing measure, then one of the
following conditions holds{\em :}
   \begin{enumerate}
\item[(i)] $\supp \mu \subset \{0\} \cup G_r$ for
some integer $r\Ge 1$, where $G_r=\{z\in \cbb\colon
z^r=1\}$,
\item[(ii)] $\supp \mu\subset\tbb$,
\item[(iii)] $\supp \mu\subset \rbb$.
   \end{enumerate}
Moreover, if $k\neq |l|$ $($resp., $k=-l$, $k=l$$)$,
then the condition {\em (i)} $($resp., {\em (ii)},
{\em (iii)}$)$ holds. If additionally $k\neq |l|$ and
$l \Le 0$, then $0\notin \supp \mu$.

Reversely, if $\gammab$ is a complex moment sequence
with a representing measure $\mu$ satisfying one of
the conditions {\em (i)-(iii)}, then $\gammab$ is
$(k,l)$-flat, where
   \begin{align} \label{ptaszki}
(k,l) =
   \begin{cases}
   (1,1) & \text{if {\em (i)} holds} \text{ and } r =
1,
   \\
   (1,r-1) & \text{if {\em (i)} holds} \text{ and } r
> 1,
   \\
   (1,-1) & \text{if {\em (ii)} holds,}
   \\
   (1,1) & \text{if {\em (iii)} holds.}
   \end{cases}
   \end{align}
   \end{thm}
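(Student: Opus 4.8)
The crux, I believe, is a single pointwise consequence of flatness: if $(m,n),(m',n')\in\nfr$ satisfy $km+ln=km'+ln'$, then $z^m\bar z^n=z^{m'}\bar z^{n'}$ for $\mu$-almost every $z$, hence on $\supp\mu$. To prove this I set $f(z)=z^m\bar z^n-z^{m'}\bar z^{n'}$. For every $(p,q)\in\nfr$ the shifted pairs $(m+p,n+q)$ and $(m'+p,n'+q)$ again lie in $\nfr$ and satisfy the same linear relation, so $(k,l)$-flatness together with \eqref{1} gives $\int_\cbb z^p\bar z^q f\,\D\mu=0$. The decisive point is that $\bar f=z^n\bar z^m-z^{n'}\bar z^{m'}$ is itself a difference of two admissible monomials, with exponents $(n,m),(n',m')\in\nfr$; feeding these two index pairs into the previous identity yields $\int_\cbb|f|^2\,\D\mu=\int_\cbb\bar f f\,\D\mu=0$, whence $f=0$ $\mu$-a.e., and since $f$ is continuous it vanishes on $\supp\mu$. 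I expect this lemma to be the main obstacle: the rest is bookkeeping, but one has to spot that multiplying $f$ by the monomial \emph{conjugate to its own terms}, rather than by an arbitrary monomial, is precisely what collapses $\int_\cbb|f|^2\,\D\mu$ to zero.

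Granting this, the forward direction reduces to choosing convenient index pairs. When $k=l$ (so $m+n=m'+n'$ triggers flatness) the pairs $(1,0),(0,1)$ give $z=\bar z$ on $\supp\mu$, i.e.\ (iii). When $k=-l$ with $k\Ge1$ (the case $k=l=0$ being degenerate), $m-n=m'-n'$ triggers flatness and the pairs $(1,1),(0,0)$ give $z\bar z=1$, so $\supp\mu\subset\tbb$, which is (ii). When $k\neq|l|$ I split according to the sign of $l$. For $l>0$ the pairs $(l,0),(0,k)$ give $z^l=\bar z^k$ on $\supp\mu$; taking moduli forces $|z|=1$ for $z\neq0$ (as $l\neq k$), and then $\bar z=z^{-1}$ turns $z^l=\bar z^k$ into $z^{k+l}=1$, so $\supp\mu\subset\{0\}\cup G_{k+l}$. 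For $l\Le0$ (so $l=-|l|$, $|l|\neq k$) the pairs $(|l|,k),(0,0)$ give $z^{|l|}\bar z^k=1$ on $\supp\mu$; this already fails at $z=0$, so $0\notin\supp\mu$, taking moduli forces $|z|=1$, and $\bar z=z^{-1}$ yields $z^{|l|-k}=1$, hence $\supp\mu\subset G_r$ with $r=\bigl||l|-k\bigr|\Ge1$. This simultaneously establishes (i) and the assertion $0\notin\supp\mu$ whenever $k\neq|l|$ and $l\Le0$, and the trichotomy ``(i) or (ii) or (iii)'' follows because the cases $k\neq|l|$, $k=-l$, $k=l$ exhaust the possibilities for $k\Ge0$.

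For the reverse implication I would simply read off $\gamma_{m,n}=\int_\cbb z^m\bar z^n\,\D\mu$ from the stated support condition. If $\supp\mu\subset\rbb$ then $z^m\bar z^n=z^{m+n}$ on $\supp\mu$, so $\gamma_{m,n}$ depends only on $m+n$ and $\gammab$ is $(1,1)$-flat. If $\supp\mu\subset\tbb$ then $\bar z=z^{-1}$ gives $z^m\bar z^n=z^{m-n}$, so $\gamma_{m,n}$ depends only on $m-n$ and $\gammab$ is $(1,-1)$-flat. If $\supp\mu\subset\{0\}\cup G_r$ with $r>1$, then on $G_r$ one has $\bar z=z^{r-1}$, whence $z^m\bar z^n=z^{m+(r-1)n}$; the case $r=1$ (support in $\{0,1\}$) is the analogous $(1,1)$-flat computation. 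The only point needing care here is the atom at $0$: one checks that $m+(r-1)n=0$ forces $(m,n)=(0,0)$, so two index pairs with equal value $m+(r-1)n=m'+(r-1)n'$ are either both $(0,0)$ or both nonzero, and in the latter case the term $\mu(\{0\})$ contributes to neither $\gamma_{m,n}$ nor $\gamma_{m',n'}$, leaving $\gammab$ genuinely $(1,r-1)$-flat.
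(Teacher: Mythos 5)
Your proof is correct and takes essentially the same route as the paper: the key lemma you isolate --- multiply $f=z^m\bar z^n-z^{m'}\bar z^{n'}$ by its two conjugate monomials, whose indices $(n,m),(n',m')$ are again admissible, and kill all four resulting moments by flatness at the shifted indices --- is precisely the paper's case-by-case computation of $\int_\cbb|z^l-\bar z^k|^2\,\D\mu=0$ for $l>0$ (via the pairs $(l,l),(0,k+l)$ and $(k+l,0),(k,k)$, which are exactly your shifted pairs) and $\int_\cbb|z^{-l}\bar z^k-1|^2\,\D\mu=0$ for $l<0$, just stated once in full generality, after which your analysis of the zero sets and your reverse direction (including the atom-at-$0$ bookkeeping for $r>1$, which the paper dismisses as ``a matter of routine'') coincide with the paper's. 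The only blemish is setting aside $k=l=0$ as ``degenerate'': the ``moreover'' claim that $k=-l$ implies (ii) includes that case, and your own argument covers it verbatim, since $(1,1)$ and $(0,0)$ satisfy \eqref{kaka} trivially when $k=l=0$, giving $z\bar z=1$ on $\supp\mu$ (the paper instead notes that $\gamma_{0,0}\delta_1$ is then the unique representing measure).
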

   \begin{proof}
Assume that $\gammab$ is a $(k,l)$-flat complex moment
sequence for some $(k,l)$ and $\mu$ is its
representing measure. In the case of $k=l=0$ the
measure $\gamma_{0,0}\delta_1$ is a (necessarily
unique) representing measure for $\gammab$. Hence, its
support satisfies the conditions (i)-(iii).

Suppose $l > 0$. Note that the pairs $(m,n)=(l,l)$ and
$(m',n')=(0,k+l)$ satisfy \eqref{kaka}. The same is
true for the pairs $(k+l,0)$ and $(k,k)$. Hence, by
the $(k,l)$-flatness of $\gammab$, we have
   \begin{align*}
\int_\cbb |z^l-\bar z^k|^2\D\mu(z) = \gamma_{l,l} -
\gamma_{k+l,0} - \gamma_{0,k+l} + \gamma_{k,k} = 0,
   \end{align*}
and so
   \begin{align} \label{hopdzis}
\text{$\supp \mu \subset \zcal_p$ with $p(z,\bar z)=
z^l-\bar z^k$.}
   \end{align}
If $k\neq l$, then the equality $z^l=\bar z^k$ implies
that $|z|=1$ for $z\neq 0$ and, consequently,
$z^{k+l}=1$. This shows that $\zcal_p \setminus \{0\}
\subset G_{k+l}$. Therefore, $\mu$ satisfies (i). In
turn, if $k=l$, then $\gammab$ is $(1,1)$-flat.
Applying \eqref{hopdzis} to $k=l=1$, we see that
$\supp \mu \subset \zcal_p=\rbb$, which means that
$\mu$ obeys (iii).

Now consider the case $l<0$. Note that the pairs
$(m,n)=(k-l,k-l)$ and $(m',n')=(k,-l)$ satisfy
\eqref{kaka}. The same is true for the pairs $(-l,k)$
and $(0,0)$. Then the $(k,l)$-flatness of $\gammab$
implies that
\begin{align*}
\int_\cbb |z^{-l}\bar z^k - 1|^2\D\mu(z) =
\gamma_{k-l,k-l} - \gamma_{-l,k} - \gamma_{k,-l} +
\gamma_{0,0} = 0,
\end{align*}
which means that $\supp \mu \subset \zcal_p$ with
$p(z,\bar z)=z^{-l}\bar z^k - 1$. As a consequence,
$\zcal_p \subset \tbb$, which shows that $\mu$
satisfies (ii) (observe that if $z\in\zcal_p$ and
$k+l\neq 0$, then $z\in G_{|k+l|}$).

In the remaining case of $l=0$, which is analogous to
that of $k=0$, $\mu$ satisfies the condition (ii) (in
fact, $\supp \mu \subset G_k$).

To prove the reverse implication, assume that
$\gammab$ is a complex moment sequence with a
representing measure $\mu$ satisfying one the
conditions (i)-(iii). First, we consider the case (i),
that is $\supp \mu \subset \{0\} \cup G_r$ for some
integer $r \Ge 1$. Then
   \begin{align*}
\gamma_{m,n} = c \, \delta_{m+n,0} + \sum_{j=0}^{r-1}
a_j \E^{\frac{2\pi j (m - n) \I}{r}}, \quad (m,n) \in
\nfr,
   \end{align*}
where $c=\mu(\{0\})$ and
$a_j=\mu\big(\big\{\E^{\frac{2\pi j
\I}{r}}\big\}\big)$ for $j=0,\ldots, r-1$. If $r=1$,
then $\gamma_{m,n} = c \, \delta_{m+n,0} + a_0$ for
$(m,n) \in \nfr$, which means that $\gammab$ is
$(1,1)$-flat. This covers the first choice in
\eqref{ptaszki}. It is a matter of routine to verify
that if $r>1$, then $\gammab$ is $(1,r-1)$-flat. This
covers the second choice in \eqref{ptaszki}. It is
easily seen that if (ii) holds, then
   \begin{align*}
\gamma_{m,n}=
   \begin{cases}
\gamma_{m-n,0} & \text{if } m\Ge n,
   \\
\gamma_{0,n-m} & \text{otherwise,}
   \end{cases}
\qquad (m,n)\in \nfr,
   \end{align*}
which implies that $\gamma_{m,n}$ depends on $m-n$.
This covers the third case in \eqref{ptaszki}.
Finally, if (iii) holds, then
$\gamma_{m,n}=\gamma_{m+n,0}$ for all $(m,n)\in \nfr$,
which covers the fourth case in \eqref{ptaszki}. This
completes the proof.
   \end{proof}
   We now proceed to complex moment sequences induced
by Hamburger moment sequences. We say that a sequence
$\sbold=\{s_n\}_{n=0}^\infty \subset \rbb$ is a {\em
Hamburger moment sequence} if there exists a Borel
measure $\tau$ on $\rbb$ such that
   \begin{align} \label{tau}
s_n=\int_\rbb x^n\D\tau(x),\quad n\geqslant 0;
   \end{align}
such a measure is called a {\em representing measure}
for $\sbold$. If $\tau$ in \eqref{tau} is unique, then
$\sbold$ is called a {\em determinate} Hamburger
moment sequence.
   \begin{pro}\label{Ham-c}
Let $\sbold=\{s_n\}_{n=0}^\infty$ be a sequence of
real numbers. Define $\gammab =
\{\gamma_{m,n}\}_{(m,n) \in \nfr}$ by
   \begin{align} \label{mplsn}
\gamma_{m,n} = s_{m+n}, \quad (m,n)\in\nfr.
   \end{align}
Then for $\gammab$ given by \eqref{mplsn} the
following assertions hold{\em :}
   \begin{enumerate}
   \item[(i)] $\sbold$ is a Hamburger moment sequence
if and only if $\gammab$ is a complex moment sequence,
or equivalently, if and only if $\gammab$ is a complex
moment sequence whose every representing measure is
supported in $\rbb$,
   \item[(ii)] $\sbold$ is a determinate Hamburger moment sequence
if and only if $\gammab$ is a determinate complex
moment sequence,
   \item[(iii)] if $\sbold$ is an indeterminate
Hamburger moment sequence and $x_0 \in \rbb$, then
$\gammab$ is an indeterminate complex moment sequence
which has infinitely many representing measures $\mu$
such that $\mu(\{x_0\}) > 0$; moreover, the
cardinality of $\PDE(\gammab)$ is equal to
$\boldsymbol{\mathfrak c}$.
   \end{enumerate}
   \end{pro}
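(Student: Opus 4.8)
The plan is to route everything through the observation that $\gammab$ is $(1,1)$-flat: since $\gamma_{m,n}=s_{m+n}$ depends only on $m+n$, the defining relation \eqref{kaka} holds with $(k,l)=(1,1)$, so Theorem \ref{unif} becomes the main engine. First I would set up a correspondence between representing measures. Given a representing measure $\tau$ for $\sbold$ on $\rbb$, regard it as a Borel measure on $\cbb$ supported in $\rbb$; since $z^m\bar z^n=x^{m+n}$ for real $z=x$, one computes $\int_\cbb z^m\bar z^n\D\tau=\int_\rbb x^{m+n}\D\tau=s_{m+n}=\gamma_{m,n}$, so $\tau$ represents $\gammab$ and $\gammab$ is a complex moment sequence. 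Conversely, if $\gammab$ is a complex moment sequence, then, $\gammab$ being $(1,1)$-flat, the case $k=l$ of Theorem \ref{unif} forces every representing measure $\mu$ of $\gammab$ to satisfy $\supp\mu\subset\rbb$, and its restriction to $\rbb$ represents $\sbold$ (take $n=0$ in \eqref{1}). This yields the cycle of implications in (i), the intermediate condition being sandwiched automatically because the support statement upgrades ``complex moment sequence'' to ``complex moment sequence with every representing measure in $\rbb$''.

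For (ii) I would observe that the two passages just described, $\tau\mapsto\tau$ (read on $\cbb$) and $\mu\mapsto\mu|_\rbb$, are mutually inverse, hence a genuine bijection between the representing measures of $\sbold$ and those of $\gammab$, precisely because Theorem \ref{unif} confines all of the latter to $\rbb$. A bijection of solution sets makes one set a singleton exactly when the other is, which is the equivalence of determinacy asserted in (ii).

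For (iii), indeterminacy of $\gammab$ is the contrapositive of (ii). The substantive point is to produce, for an arbitrary $x_0\in\rbb$, infinitely many representing measures charging $\{x_0\}$. I would reduce to the atom-at-$0$ case already in the literature: the translated sequence $\tilde s_n=\sum_{k=0}^n\binom{n}{k}(-x_0)^{n-k}s_k$ is the moment sequence of the push-forward of any representing measure under $x\mapsto x-x_0$, and since this affine change of variable is a bijection of representing-measure sets, $\tilde\sbold$ is again indeterminate. By \cite[Theorem 2.13]{Sh-Tam}, $\tilde\sbold$ admits a representing measure with an atom at $0$; pushing it back under $x\mapsto x+x_0$ gives a representing measure $\tau$ for $\sbold$ with $\tau(\{x_0\})>0$. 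Choosing any second representing measure $\tau'\neq\tau$ (available by indeterminacy) and forming the convex combinations $\lambda\tau+(1-\lambda)\tau'$ with $\lambda\in(0,1]$ produces infinitely many distinct representing measures, each charging $\{x_0\}$ with mass at least $\lambda\,\tau(\{x_0\})$. Transporting them to $\cbb$ through the bijection of (i) gives the desired family for $\gammab$.

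The cardinality claim then drops out: applying the previous step with $x_0=0$ furnishes a representing measure $\mu$ for $\gammab$ with $\mu(\nul)>0$, whence Proposition \ref{snu2} gives that the cardinality of $\PDE(\gammab)$ equals $\boldsymbol{\mathfrak c}$. The one place requiring genuine external input is the existence of a representing measure with a prescribed atom; I expect that to be the main obstacle, and the translation trick combined with the cited atom-at-$0$ result is what disposes of it, everything else being bookkeeping around Theorem \ref{unif} and the measure-correspondence it supplies.
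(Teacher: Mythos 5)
Your proposal is correct and follows essentially the same route as the paper: parts (i) and (ii) via the $(1,1)$-flatness of $\gammab$, Theorem \ref{unif}, and the resulting bijection between the representing measures of $\sbold$ and of $\gammab$, and part (iii) via convex combinations of representing measures followed by Proposition \ref{snu2} for the cardinality of $\PDE(\gammab)$. The only deviation is how the atom at $x_0$ is produced: the paper invokes \cite[Theorem 2.13]{Sh-Tam} directly to get measures $\tau_0,\tau_1$ with $\tau_0(\{x_0\})=0$ and $\tau_1(\{x_0\})>0$ at the arbitrary point $x_0$, whereas you reduce to the atom-at-$0$ case by a translation (binomial transform) of the moment sequence; both are valid, and this variation is immaterial to the structure of the argument.
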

   \begin{proof}
(i) Suppose that $\sbold$ is a Hamburger moment
sequence with a representing measure $\tau$. It is
easily seen that the Borel measure $\mu$ on $\cbb$
defined by
   \begin{align} \label{numerek1}
\mu(\sigma)=\tau(\sigma\cap\rbb), \quad \sigma \in
\borel{\cbb},
   \end{align}
is a representing measure for $\gammab$ supported in
$\rbb$. Suppose now that $\gammab$ is a complex moment
sequence with a representing measure $\varrho$. Since
$\gammab$ is $(1,1)$-flat, Theorem \ref{unif} implies
that $\varrho$ is supported in $\rbb$. As a
consequence, $\sbold$ is a Hamburger moment sequence
with the representing measure $\tau$ defined by
   \begin{align} \label{numerek2}
\tau(\sigma) = \varrho(\sigma), \quad \sigma \in
\borel{\rbb}.
   \end{align}
The above argument concerning the support of $\varrho$
also establishes the second equivalence in (i).

(ii) This is a direct consequence of \eqref{numerek1}
and \eqref{numerek2} and the fact that each
representing measure for $\gammab$ is supported in
$\rbb$ as shown in (i).

(iii) Suppose $\sbold$ is an indeterminate Hamburger
moment sequence. Then, by \cite[Theorem 2.13]{Sh-Tam}
(see also \cite[Theorem 5]{sim}), there exist two
representing measures $\tau_0$ and $\tau_1$ of
$\sbold$ such that $\tau_0(\{x_0\}) =0$ and
$\tau_1(\{x_0\})> 0$. Taking convex combinations
$\tau_{\alpha}\okr\alpha \tau_1 + (1-\alpha)\tau_0$
for $\alpha \in (0,1)$, we get representing measures
for $\sbold$ such that $\tau_{\alpha}(\{x_0\})
> 0$ for all $\alpha \in (0,1]$. It is easily seen that
the corresponding representing measures $\mu_{\alpha}$
of $\gammab$ given by \eqref{numerek1} have the
property that $\mu_{\alpha} (\{x_0\}) > 0$ for $\alpha
\in (0,1]$ and $\mu_{\alpha} \neq \mu_{\beta}$ for all
$\alpha, \beta \in (0,1]$ such that $\alpha
\neq\beta$. The ``moreover'' part follows from
Proposition \ref{snu2}.
   \end{proof}
Regarding Proposition \ref{Ham-c}, one should provide
an example of an indeterminate Hamburger moment
sequence. One of the possible choices is the famous
example $\{\E^{(n+1)^2/4}\}_{n=0}^{\infty}$ due to
Stieltjes (see \cite{stiel}). Below, we present a
class of indeterminate Hamburger moment sequences
introduced recently in \cite{c-s-s}.
   \begin{exa}\label{m+n}
Fix a nonzero complex function $\omega$ on $\rbb$ of
class $\mathcal C^\infty$ whose support is compact and
define the sequence $\{a_n^\omega\}_{n=0}^\infty$ by
   \begin{align*}
a_n^\omega = (-\I)^n \int_\rbb \frac{\D^n \omega}{\D
x^n} (x) \, \overline{\omega(x)} \, \D x, \quad
n=0,1,2,\ldots
   \end{align*}
This is an indeterminate Hamburger moment sequence
with striking properties. Namely, one can find
continuum explicitly described representing measures
for $\{a_n^\omega\}_{n=0}^{\infty}$ such that
   \begin{itemize}
   \item the support of each of them is in arithmetic
progression,
   \item the supports of all these measures together
partition $\rbb$,
   \item all of them are of infinite
order\,\footnote{\;This means that for any such
measure $\tau$, the codimension of polynomials in
$L^2(\tau)$ is infinite.}.
   \end{itemize}
All these three conditions hold under the assumption
that the Fourier transform of $\omega$ does not vanish
on $\rbb$ (such $\omega$ always exists!). Hence, for
any $x_0 \in \rbb$, there exists a representing
measure $\tau$ of $\{a_n^\omega\}_{n=0}^\infty$ in the
above mentioned family such that $\tau(\{x_0\})
> 0$.
   \hfill$\triangleleft$
   \end{exa}
   Instead of Hamburger moment sequences we may
consider Herglotz moment sequences and the complex
moment sequences induced by them. We say that a
sequence $\sbold=\{s_n\}_{n=-\infty}^\infty$ of
complex numbers is a {\em Herglotz $($trigonometric$)$
moment sequence} if there exists a Borel measure
$\varrho$ on $\tbb$ such that
   \begin{align} \label{Herg}
s_n = \int_{\tbb} z^n \D \varrho(z), \quad n=0,\pm 1,
\pm 2, \dots;
   \end{align}
such a measure is called a {\em representing measure}
for $\sbold$. Every Herglotz moment sequence is
determinate, that is the measure $\varrho$ in
\eqref{Herg} is unique (see \cite[Theorem 5.1.2]{Akh};
see also \cite[Theorem 1.7.2]{b-s}). It is worth
mentioning that a Herglotz moment sequence $\sbold$
has the following Hermitian symmetry property:
   \begin{align*}
s_n = \overline{s_{-n}}, \quad n=0,\pm 1, \pm 2,
\dots.
   \end{align*}
This means that such $\sbold$ is uniquely determined
by its entries $s_n$ with $n=0,1,2,\ldots$.
   \begin{pro}
Let $\sbold=\{s_n\}_{n=-\infty}^\infty$ be a sequence
of complex numbers. Define $\gammab =
\{\gamma_{m,n}\}_{(m,n) \in \nfr}$ by
   \begin{align*}
\gamma_{m,n} = s_{m-n}, \quad (m,n)\in\nfr.
   \end{align*}
Then $\sbold$ is a Herglotz moment sequence if and
only if $\gammab$ is a complex moment sequence.
Moreover, if $\sbold$ is a Herglotz moment sequence,
then $\gammab$ is a determinate complex moment
sequence whose unique representing measure is
supported in $\tbb$, $\PDE(\gammab)=\{\Gammab\}$ and
$\Gammab$ is determinate.
   \end{pro}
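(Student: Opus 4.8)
The plan is to reduce everything to the single observation that on the unit circle $\tbb$ one has $\bar z = z^{-1}$, so that $z^m\bar z^n = z^{m-n}$ for $z\in\tbb$; this identity is what links the bilateral sequence $\sbold$, indexed by $m-n$, with the two-variable moments of $\gammab$. For the forward implication I would start from a representing measure $\varrho$ for the Herglotz moment sequence $\sbold$ on $\tbb$, regard it as a Borel measure on $\cbb$ concentrated on $\tbb$, and compute $\int_\cbb z^m\bar z^n\D\varrho(z) = \int_\tbb z^{m-n}\D\varrho(z) = s_{m-n} = \gamma_{m,n}$ for every $(m,n)\in\nfr$; hence $\gammab$ is a complex moment sequence with a representing measure supported in $\tbb$.

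For the reverse implication I would observe that $\gammab$ is $(1,-1)$-flat, since $\gamma_{m,n}=s_{m-n}$ depends only on $m-n$ and $km+ln=m-n$ when $(k,l)=(1,-1)$. Because $k=-l$ here, Theorem \ref{unif} forces every representing measure $\varrho$ of $\gammab$ to be supported in $\tbb$. Using $z^m\bar z^n=z^{m-n}$ once more, I would recover $s_k=\int_\tbb z^k\D\varrho(z)$ for every integer $k$: for $k\Ge 0$ take $(m,n)=(k,0)$, while for $k<0$ take $(m,n)=(0,-k)$, in which case $\bar z^{-k}=z^{k}$ on $\tbb$ yields the same value. Thus $\sbold$ is a Herglotz moment sequence with representing measure $\varrho$, which simultaneously establishes the support property asserted in the ``moreover'' part.

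For determinacy I would let $\mu_1,\mu_2$ be any two representing measures of $\gammab$; the flatness argument places both on $\tbb$, and the computation above yields $\int_\tbb z^k\D\mu_1 = \int_\tbb z^k\D\mu_2 = s_k$ for all integers $k$. The uniqueness of the representing measure for a Herglotz moment sequence (the determinacy of the Herglotz moment problem cited above) then gives $\mu_1=\mu_2$, so $\gammab$ is determinate. Finally, since the unique representing measure is supported in $\tbb$ and $0\notin\tbb$, it has no atom at $0$; hence condition (i) of Theorem \ref{null} is satisfied, and the equivalence there yields at once that $\PDE(\gammab)=\{\Gammab\}$ and that $\Gammab$ is determinate.

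The argument is essentially an assembly of the earlier results, so there is no single hard analytic step; the only point requiring care is the reverse implication, where one must correctly invoke Theorem \ref{unif} in the borderline case $k=|l|$ (here realized as $k=-l$) to guarantee that the support lands in $\tbb$ rather than in $\rbb$ or in a finite group of roots of unity, and then verify that the recovered values $s_k$ are consistent for negative $k$ via $\bar z=z^{-1}$.
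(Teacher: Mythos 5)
Your proof is correct and follows essentially the same route as the paper: the forward direction by viewing the Herglotz representing measure as a measure on $\cbb$ concentrated on $\tbb$, the converse via $(1,-1)$-flatness and Theorem \ref{unif}, and the ``moreover'' part via the determinacy of the Herglotz moment problem together with Theorem \ref{null}, using that the representing measure has no atom at $0$. You merely spell out details (the recovery of $s_k$ for negative $k$ and the determinacy of $\gammab$) that the paper leaves implicit.
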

   \begin{proof}
If $\sbold$ is a Herglotz moment sequence with a
representing measure $\varrho$, then $\gammab$ is a
complex moment sequence with the representing measure
$\mu$ given by
   \begin{align*}
\mu(\sigma)=\varrho(\sigma\cap\tbb),\quad \sigma \in
\borel{\cbb}.
   \end{align*}
Suppose that $\gammab$ is a complex moment sequence
with a representing measure $\nu$. Since $\gammab$ is
$(1,-1)$-flat, Theorem \ref{unif} ensures us that the
support of $\nu$ is contained in $\tbb$. Thus the
Borel measure $\varrho$ on $\tbb$ given by
   \begin{align*}
\varrho(\sigma) = \nu(\sigma), \quad \sigma \in
\borel{\tbb},
   \end{align*}
is a representing measure for $\sbold$. The ``moreover
part follows from the determinacy of Herglotz moment
sequences, the fact that $\nu(\{0\}) = 0$ and
Theorem~\ref{null}.
   \end{proof}
Regarding Proposition \ref{snu2}, we show that it may
happen that a complex moment sequence is indeterminate
and that none of its representing measures satisfies
\eqref{munul}.
   \begin{exa}\label{no-atom}
Consider an indeterminate Hamburger moment sequence
$\sbold = \{s_n\}_{n=0}^{\infty}$ (see e.g., Example
\ref{m+n}). Let $\tau$ be a representing measure for
$\sbold$. Define the Borel measure $\mu$ on $\cbb$ by
   \begin{align} \label{musig}
\mu(\sigma) = \tau((\sigma - \I)\cap \rbb), \quad
\sigma \in \borel{\cbb}.
   \end{align}
Define the complex moment sequence
$\gammab=\{\gamma_{m,n}\}_{(m,n)\in \nfr}$ by
   \begin{align*}
\gamma_{m,n} = \int_\cbb z^m\bar z^n \D\mu(z),\quad
m,n\geqslant 0.
   \end{align*}
It follows from \eqref{musig} and the indeterminacy of
$\sbold$ that $\gammab$ is indeterminate. By
\eqref{musig} again, the measure $\mu$ is supported in
$\rbb+\I$. Since $\rbb+\I=\zcal_p$ with $p(z,\bar z) =
z-\bar z - 2 \I$, we infer from Proposition
\ref{inv_supp} that each representing measure for
$\gammab$ is supported in $\rbb+\I$, and consequently
none of them satisfies \eqref{munul}.
   \hfill$\triangleleft$
   \end{exa}
   \section{Representing measures whose support
is Zariski dense} Our goal in this section is to
establish a wide class of complex moment sequences,
each of which has the following properties:
   \begin{itemize}
   \item[$1^{\circ}$] it is indeterminate,
   \label{Marcypo1}
   \item[$2^{\circ}$] none of its
representing measures has an atom at $0$,
   \item[$3^{\circ}$] all its representing
measures have supports dense in $\cbb$ with respect to
the Zariski topology.
   \end{itemize}
Let us recall that the Zariski topology on $\cbb$
consists of all the sets of the form $\cbb \setminus
\zcal_p$, where $p\in \cbb[z,\bar z]$, and it
satisfies the T$_1$ separation axiom. We refer the
reader to \cite{be-ris,BoCoRo} for more information on
the Zariski topology. The property $3^{\circ}$ means
that the support of any representing measure for the
complex moment sequence under consideration is not
contained in a proper real algebraic set $\zcal_p$, as
opposed to Example~ \ref{no-atom}.

For a technical reason, it is much simpler to state
and prove our result in terms of two-dimensional
Hamburger moment problem; afterwards we will turn back
to complex moment sequences. We say that $\abold=
\{a_{m,n}\}_{m,n=0}^\infty \subset \rbb$ is a {\em
two-dimensional Hamburger moment sequence} if there
exists a Borel measure $\varrho$ on $\rbb^2$, called a
{\em representing measure} for $\abold$, such that
   \begin{align*}
a_{m,n} = \int_{\rbb^2} x^m y^n \D \varrho(x,y), \quad
m,n \Ge 0.
   \end{align*}
If such $\varrho$ is unique, $\abold$ is called {\em
determinate}.

   We begin with recalling a known fact which is
indispensable in this section. For the reader's
convenience, we include its simple proof (see
\cite{Pet} for more on this topic).
   \begin{lem} \label{abol}
If $\abold=\{a_{m,n}\}_{m,n=0}^\infty$ is a
two-dimensional Hamburger moment sequence and
$\varrho$ its representing measure, then
$\{a_{m,0}\}_{m=0}^\infty$ $($resp.,
$\{a_{0,n}\}_{n=0}^\infty$$)$ is~ a~ Hamburger moment
sequence with the representing measure $\varrho \circ
\pi_1^{-1}$ $($resp., $\varrho \circ
\pi_2^{-1}$$)$,~and
   \begin{align} \label{marcy1}
\supp \varrho \circ \pi_1^{-1} = \overline{\pi_1(\supp
\varrho)} \quad \Big(\text{resp., }\supp \varrho \circ
\pi_2^{-1} = \overline{\pi_2(\supp \varrho)}\Big),
   \end{align}
where $\pi_j\colon \rbb^2 \to \rbb$, $j=1,2$, are
mappings given by
   \begin{align} \label{abold}
\text{$\pi_1(x,y)=x$ and $\pi_2(x,y)=y$ for $(x,y)\in
\rbb^2$.}
   \end{align}
   \end{lem}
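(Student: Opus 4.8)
The plan is to treat the two assertions of the lemma separately: first the moment representation of the marginal sequences via the measure transport theorem, and then the support identity \eqref{marcy1} by a direct topological argument. By the symmetry between the two coordinates it suffices to carry everything out for the first projection $\pi_1$; the statements involving $\pi_2$ follow verbatim after interchanging the roles of $x$ and $y$.

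For the moment representation, I would put $n=0$ in the defining formula for $\abold$, so that $a_{m,0} = \int_{\rbb^2} x^m \D\varrho(x,y) = \int_{\rbb^2} (\pi_1(x,y))^m \D\varrho(x,y)$ for every $m\Ge 0$. Applying the measure transport theorem to the continuous map $\pi_1$ and the integrand $t\mapsto t^m$ yields $a_{m,0} = \int_\rbb t^m \D(\varrho\circ\pi_1^{-1})(t)$. Since $\varrho\circ\pi_1^{-1}$ is a finite Borel measure on $\rbb$ (its total mass equals $\varrho(\rbb^2)=a_{0,0}$), this shows at once that $\{a_{m,0}\}_{m=0}^\infty$ is a Hamburger moment sequence with representing measure $\varrho\circ\pi_1^{-1}$.

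For the support identity, write $S\okr\supp\varrho$ and $\nu\okr\varrho\circ\pi_1^{-1}$. To prove $\supp\nu\subset\overline{\pi_1(S)}$ I would check that the closed set $\overline{\pi_1(S)}$ carries the whole mass of $\nu$: every point of $S$ projects into $\pi_1(S)\subset\overline{\pi_1(S)}$, so the preimage $\pi_1^{-1}(\rbb\setminus\overline{\pi_1(S)})$ is disjoint from $S$ and hence $\varrho$-null, whence $\nu(\rbb\setminus\overline{\pi_1(S)})=0$; as $\supp\nu$ is the smallest closed set of full $\nu$-measure, the inclusion follows. For the reverse inclusion it suffices, $\supp\nu$ being closed, to show $\pi_1(S)\subset\supp\nu$. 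Given $(x_0,y_0)\in S$ and an open neighborhood $U$ of $x_0=\pi_1(x_0,y_0)$ in $\rbb$, the cylinder $\pi_1^{-1}(U)=U\times\rbb$ is an open neighborhood of $(x_0,y_0)$, so $\nu(U)=\varrho(U\times\rbb)>0$ by the very definition of the support of $\varrho$; thus $x_0\in\supp\nu$. Combining the two inclusions gives \eqref{marcy1}.

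I do not expect a serious obstacle here; the only point requiring care is the topological bookkeeping in the support identity, namely exploiting the openness of the cylinder $U\times\rbb=\pi_1^{-1}(U)$ for the inclusion $\overline{\pi_1(S)}\subset\supp\nu$ and the $\varrho$-nullity of $\rbb^2\setminus S$ for the opposite one. The measure transport theorem disposes of the moment computation mechanically, and the argument for $\pi_2$ is identical.
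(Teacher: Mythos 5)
Your proposal is correct, and half of it coincides with the paper's own argument: the paper, too, obtains the moment representation $a_{m,0}=\int_\rbb x^m\,\D(\varrho\circ\pi_1^{-1})(x)$ by exactly the same application of the measure transport theorem to $\pi_1$ (with the symmetry remark disposing of $\pi_2$). The difference is in the support identity \eqref{marcy1}: the paper does not prove it at all, but simply cites \cite[Lemma 3.2]{St-St}, whereas you supply a direct, self-contained two-inclusion argument. Your argument is sound: the inclusion $\supp\varrho\circ\pi_1^{-1}\subset\overline{\pi_1(\supp\varrho)}$ follows because $\pi_1^{-1}(\rbb\setminus\overline{\pi_1(\supp\varrho)})$ misses $\supp\varrho$ and is therefore $\varrho$-null (here you are implicitly using that a finite Borel measure on a separable metric space puts full mass on its closed support, which is exactly the fact the paper invokes via \cite[Theorem II.2.1]{Part}); and the reverse inclusion follows from the openness of the cylinder $\pi_1^{-1}(U)=U\times\rbb$, which forces $\varrho\circ\pi_1^{-1}(U)>0$ for every open $U$ meeting $\pi_1(\supp\varrho)$. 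What your route buys is independence from the external reference, at the cost of a few lines of topological bookkeeping; what the citation buys the authors is brevity. Both are complete proofs of the lemma.
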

   \begin{proof}
The equalities in \eqref{marcy1} follow from
\cite[Lemma 3.2]{St-St}. Using the measure transport
theorem, we get
   \begin{align*}
a_{m,0} = \int_{\rbb^2} x^m\D\varrho(x,y) = \int_\rbb
x^m \D\varrho\circ \pi_1^{-1}(x), \quad m \Ge 0.
   \end{align*}
A similar argument applies to
$\{a_{0,n}\}_{n=0}^\infty$.
   \end{proof}
The following shows that in some cases supports of
representing measures can be localized in
non-algebraic subsets of $\cbb$ (cf.\ Proposition
\ref{inv_supp}; see also Proposition~\ref{fontan2}).
   \begin{pro} \label{fontan}
Let $\abold=\{a_{m,n}\}_{m,n=0}^\infty$ be a
two-dimensional Hamburger moment sequence and
$\varrho$ its representing measure. Then the following
assertions hold{\em :}
   \begin{enumerate}
   \item[(i)] if the set $\pi_1(\supp \varrho)$ $($resp.,
$\pi_2(\supp \varrho)$$)$ is bounded, then
$\{a_{m,0}\}_{m=0}^\infty$ $($resp.,
$\{a_{0,n}\}_{n=0}^\infty$$)$ is a determinate
Hamburger moment sequence,
   \item[(ii)] if $\{a_{m,0}\}_{m=0}^\infty$ $($resp.,
$\{a_{0,n}\}_{n=0}^\infty$$)$ is a determinate
Hamburger moment sequence, then for any representing
measure $\tilde\varrho$ for~ $\abold,$
   \begin{gather}  \label{smiechDC2}
\supp \tilde \varrho \subset \overline{\pi_1(\supp
\varrho)} \times\rbb \quad \Big(\text{resp., }\supp
\tilde \varrho \subset \rbb \times
\overline{\pi_2(\supp \varrho)}\Big),
   \\ \label{smiechDC}
\overline{\pi_1(\supp \tilde\varrho)} =
\overline{\pi_1(\supp \varrho)} \quad
\Big(\text{resp., } \overline{\pi_2(\supp
\tilde\varrho)} = \overline{\pi_2(\supp \varrho)}
\Big).
   \end{gather}
   \end{enumerate}
   \end{pro}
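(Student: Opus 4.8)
The plan is to reduce both assertions to the one-dimensional marginal sequences $\{a_{m,0}\}_{m=0}^\infty$ and $\{a_{0,n}\}_{n=0}^\infty$ by leaning on Lemma \ref{abol}, which already identifies their representing measures as the push-forwards $\varrho\circ\pi_j^{-1}$ and records the support identities $\supp\varrho\circ\pi_j^{-1}=\overline{\pi_j(\supp\varrho)}$. I will only write out the ``$\pi_1$'' versions, the ``$\pi_2$'' ones being identical after swapping the roles of the two coordinates.

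For (i), I would first note that if $\pi_1(\supp\varrho)$ is bounded, then its closure $\overline{\pi_1(\supp\varrho)}$ is a compact subset of $\rbb$, and by Lemma \ref{abol} this closure equals $\supp\varrho\circ\pi_1^{-1}$. Thus $\{a_{m,0}\}_{m=0}^\infty$ admits a representing measure with compact support, and I would invoke the classical fact that such a Hamburger moment sequence is determinate. Concretely, if $\supp\varrho\circ\pi_1^{-1}\subset[-R,R]$ with $R>0$, then $a_{2m,0}\Le R^{2m}a_{0,0}$, whence $a_{2m,0}^{-1/(2m)}\Ge R^{-1}a_{0,0}^{-1/(2m)}\to R^{-1}>0$, so Carleman's condition $\sum_m a_{2m,0}^{-1/(2m)}=\infty$ holds and determinacy follows.

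For (ii), I would assume $\{a_{m,0}\}_{m=0}^\infty$ determinate and take an arbitrary representing measure $\tilde\varrho$ for $\abold$. Applying Lemma \ref{abol} to $\tilde\varrho$ shows that $\tilde\varrho\circ\pi_1^{-1}$ is also a representing measure for $\{a_{m,0}\}_{m=0}^\infty$; determinacy then forces $\tilde\varrho\circ\pi_1^{-1}=\varrho\circ\pi_1^{-1}$. Passing to supports and reading off the support identity of Lemma \ref{abol} on each side yields $\overline{\pi_1(\supp\tilde\varrho)}=\overline{\pi_1(\supp\varrho)}$, which is \eqref{smiechDC}. Then \eqref{smiechDC2} drops out from the elementary chain $\supp\tilde\varrho\subset\pi_1^{-1}\big(\pi_1(\supp\tilde\varrho)\big)\subset\pi_1^{-1}\big(\overline{\pi_1(\supp\varrho)}\big)=\overline{\pi_1(\supp\varrho)}\times\rbb$.

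The only genuinely analytic ingredient is the bounded-support-implies-determinacy fact used in (i); everything else is formal bookkeeping of push-forwards and their supports. I expect the one place needing care is the justification of that determinacy fact (via Carleman's criterion, or equivalently density of polynomials in $L^2$ of a compactly supported measure), while the rest follows mechanically from the uniqueness encoded in the word ``determinate'' together with the support formulas already supplied by Lemma \ref{abol}.
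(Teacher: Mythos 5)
Your proof is correct and follows essentially the same route as the paper: Lemma \ref{abol} identifies the marginal representing measures and their supports, compact support of the marginal gives determinacy in (i), and determinacy forces $\tilde\varrho\circ\pi_1^{-1}=\varrho\circ\pi_1^{-1}$ in (ii), from which both conclusions follow. The only cosmetic differences are that you verify the compact-support-implies-determinacy fact via Carleman's criterion where the paper simply cites Fuglede, and you deduce \eqref{smiechDC2} set-theoretically from \eqref{smiechDC} via $\supp\tilde\varrho\subset\pi_1^{-1}\big(\pi_1(\supp\tilde\varrho)\big)$, where the paper instead observes that $\tilde\varrho$ vanishes on the open set $\rbb^2\setminus(S\times\rbb)$; both variants are sound.
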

   \begin{proof}
By symmetry, it suffices to consider the case of
$\{a_{m,0}\}_{m=0}^\infty$.

(i) If $\pi_1(\supp \varrho)$ is bounded, then by
Lemma \ref{abol} and \eqref{marcy1} the Hamburger
moment sequence $\{a_{m,0}\}_{m=0}^\infty$ has a
compactly supported representing measure and as such
is determinate (see \cite[p.\ 50]{Fug}).

(ii) Suppose $\{a_{m,0}\}_{m=0}^\infty$ is a
determinate Hamburger moment sequence and
$\tilde\varrho$ is a representing measure for
$\abold$. By Lemma~ \ref{abol}, $\tilde\varrho \circ
\pi_1^{-1}=\varrho \circ \pi_1^{-1}$. This together
with \eqref{marcy1} implies the first equality in
\eqref{smiechDC}. Set $S=\supp \varrho \circ
\pi_1^{-1}$. Then the equality $\tilde\varrho \circ
\pi_1^{-1}=\varrho \circ \pi_1^{-1}$ yields
   \begin{align*}
0=\tilde \varrho \circ \pi_1^{-1}(\rbb\setminus S) =
\tilde \varrho (\rbb^2\setminus (S \times \rbb)).
   \end{align*}
Since $\rbb^2\setminus (S \times \rbb)$ is an open
subset of $\rbb^2$, we see that $\supp \tilde \varrho
\subset S\times \rbb$. Combined with \eqref{marcy1},
this gives the first inclusion in \eqref{smiechDC2}.
   \end{proof}
   We now turn to the main result of this section.
   \begin{thm} \label{DC1}
Let $\sbold=\{s_m\}_{m=0}^{\infty}$ be a determinate
Hamburger moment sequence such that its unique
representing measure $\mu$ has infinite support and
$\mu(\nul)=0$, and let $\tbold=\{t_n\}_{n=0}^{\infty}$
be an indeterminate Hamburger moment sequence. Then
the sequence $\sbold\otimes \tbold =
\{(\sbold\otimes\tbold)_{m,n}\}_{m,n=0}^\infty$
defined by
   \begin{align*}
(\sbold\otimes\tbold)_{m,n} = s_m t_n, \quad m,n \Ge
0,
   \end{align*}
is a two-dimensional Hamburger moment sequence
satisfying the conditions $1^{\circ}$, $2^{\circ}$ and
$3^{\circ}$ of page {\em \pageref{Marcypo1}} with
$\rbb^2$ in place of $\cbb$.
   \end{thm}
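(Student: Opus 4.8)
The plan is to produce one representing measure explicitly, settle the indeterminacy ($1^{\circ}$) and the absence of an atom at the origin ($2^{\circ}$) by passing to marginals, and then concentrate the real work on the Zariski density ($3^{\circ}$).

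First, if $\mu$ is the representing measure of $\sbold$ and $\nu$ is any representing measure of $\tbold$, then the product $\mu\otimes\nu$ on $\rbb^2$ satisfies $\int_{\rbb^2}x^m y^n\,\D(\mu\otimes\nu)=s_m t_n$, so $\sbold\otimes\tbold$ is a two-dimensional Hamburger moment sequence. For $1^{\circ}$ I would take two distinct representing measures $\nu_1\neq\nu_2$ of the indeterminate $\tbold$; since $s_0=\mu(\rbb)>0$, the measures $\mu\otimes\nu_1$ and $\mu\otimes\nu_2$ are distinct (test against products $f(x)g(y)$), whence $\sbold\otimes\tbold$ is indeterminate. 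The structural observation for $2^{\circ}$ is that the first marginal sequence $\{(\sbold\otimes\tbold)_{m,0}\}_m=\{t_0 s_m\}_m=t_0\sbold$ is determinate (here $t_0>0$ because $\tbold$ is nonzero); hence, by Lemma~\ref{abol}, every representing measure $\tilde\varrho$ of $\sbold\otimes\tbold$ has $\tilde\varrho\circ\pi_1^{-1}=t_0\mu$. Consequently $\tilde\varrho(\{(0,0)\})\Le \tilde\varrho(\{0\}\times\rbb)=t_0\mu(\nul)=0$, which is $2^{\circ}$.

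For $3^{\circ}$, suppose toward a contradiction that some representing measure $\tilde\varrho$ has $\supp\tilde\varrho\subset\zcal_p$ for a nonzero $p\in\rbb[x,y]$, and write $p(x,y)=\sum_{j=0}^{d}q_j(x)\,y^j$ with $q_j\in\rbb[x]$, $q_d\not\equiv0$ and $d=\deg_y p$. Because $x^m y^n p(x,y)$ vanishes $\tilde\varrho$-almost everywhere, and the mixed moments factor as $\int x^{m+i}y^{n+j}\,\D\tilde\varrho=s_{m+i}t_{n+j}$, expanding the $q_j$ yields the relation
\begin{align*}
\sum_{j=0}^{d}P_j(m)\,t_{n+j}=0,\qquad m,n\Ge0,
\end{align*}
where $P_j(m)=\int_\rbb x^m q_j(x)\,\D\mu(x)$. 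For fixed $m$ this is a candidate linear recurrence for $\tbold$, and the two things to verify are that its leading coefficient is not forced to vanish and that a genuine recurrence collapses $\tbold$.

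The leading coefficient $P_d$ cannot vanish identically: $P_d(m)=0$ for all $m$ would give $\int x^m q_d\,\D\mu=0$ for all $m$, hence $\int q_d^2\,\D\mu=0$, so $q_d=0$ $\mu$-almost everywhere, forcing $\supp\mu$ into the finite zero set of $q_d$ and contradicting the infinite support of $\mu$. Fixing $m_0$ with $P_d(m_0)\neq0$, the relation becomes a nontrivial order-$d$ recurrence $\sum_{j}P_j(m_0)t_{n+j}=0$; for any representing measure $\nu$ of $\tbold$ one then has $\int y^n\chi(y)\,\D\nu=0$ for all $n$, where $\chi(y)=\sum_j P_j(m_0)y^j$, and testing against $\chi$ gives $\int\chi^2\,\D\nu=0$. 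Thus every representing measure of $\tbold$ is supported in the finite set $\{\chi=0\}$, which forces $\tbold$ to be determinate (finitely supported measures with prescribed moments are unique by a Vandermonde argument), contradicting the indeterminacy of $\tbold$; this contradiction establishes $3^{\circ}$. I expect this last block — extracting the finite recurrence from the confinement to $\zcal_p$ and recognizing that it is incompatible with the indeterminacy of $\tbold$, with the non-degeneracy $P_d\not\equiv0$ supplied by the infinite support of $\mu$ — to be the main obstacle, the remaining steps being routine.
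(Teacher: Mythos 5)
Your proof is correct. For $1^{\circ}$ and $2^{\circ}$ it essentially coincides with the paper's argument: the paper normalizes $t_0=1$, proves injectivity of $\nu\mapsto\mu\otimes\nu$ for $1^{\circ}$, and uses Lemma \ref{abol} together with the determinacy of $\sbold$ to get $\tilde\varrho(\{0\}\times\rbb)=0$ for $2^{\circ}$, exactly as you do (your remark that $t_0>0$ is justified because an indeterminate sequence is in particular nonzero). For $3^{\circ}$, however, you take a genuinely different route. The paper argues softly: it records that $\supp(\mu\otimes\nu)=\supp\mu\times\supp\nu$, observes that a polynomial vanishing on an infinite-by-infinite product set must vanish identically, and then invokes the two-dimensional analogue of Proposition \ref{inv_supp} to transfer any hypothetical algebraic confinement of some representing measure $\tilde\varrho$ to the particular measure $\mu\otimes\nu$, a contradiction. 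You instead work directly with moment identities: the confinement $\supp\tilde\varrho\subset\zcal_p$ yields $\sum_{j=0}^{d}P_j(m)\,t_{n+j}=0$ with $P_j(m)=\int_\rbb x^m q_j\,\D\mu$; the infinite support of $\mu$ forces $P_d\not\equiv 0$ (via $\int q_d^2\,\D\mu=0$ otherwise); and the resulting nontrivial recurrence confines every representing measure of $\tbold$ to the finite zero set of $\chi$, whence determinacy of $\tbold$ by Vandermonde, a contradiction. The two mechanisms are cousins, since both ultimately exploit that integrals of squares of polynomials are determined by the moments, but your version is more self-contained: it needs neither the product-support formula, nor the two-dimensional version of Proposition \ref{inv_supp}, nor the quoted fact that representing measures of indeterminate Hamburger sequences have infinite support (your Vandermonde step re-proves that fact implicitly), at the price of a longer and more computational argument than the paper's short conceptual contradiction.
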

   \begin{proof}
With no loss of generality, we can assume that
$t_0=1$. The indeterminacy of $\tbold$ implies that
   \begin{align} \label{supinf}
\text{the support of any representing measure for
$\tbold$ is infinite.}
   \end{align}
It follows from the Fubini theorem that the mapping
$\nu\mapsto \mu\otimes \nu$ acts between the set of
all representing measures for $\tbold$ and the set of
all representing measures for $\sbold\otimes\tbold$,
where $\mu\otimes \nu$ stands for the product measure
of $\mu$ and $\nu$. Since $\mu(\rbb) \neq 0$, the
mapping is easily seen to be injective. Hence, by the
indeterminacy of $\tbold$, the sequence
$\sbold\otimes\tbold$ is indeterminate. This shows
$1^{\circ}$.

For $2^{\circ}$, take any representing measure
$\varrho$ for $\abold \okr \sbold \otimes \tbold$.
Since $a_{m,0} = s_m$ for all integers $m\Ge 0$ and
$\sbold$ is determinate, we infer from Lemma
\ref{abol} that $\mu=\varrho\circ \pi_1^{-1}$, which
implies that
   \begin{align*}
0=\mu(\{0\}) = \varrho(\{0\}\times \rbb).
   \end{align*}
This yields $2^{\circ}$.

It remains to prove $3^{\circ}$. Take any representing
measure $\nu$ for $\tbold$. It is a well-known and
easy to prove fact that
   \begin{align} \label{spppro}
\supp\mu\otimes\nu=\supp\mu\times\supp \nu.
   \end{align}
Suppose that a polynomial $p\in \cbb[x,y]$ vanishes on
$\supp\mu\otimes\nu$. We will show that $p=0$. Indeed,
if $x \in \supp\mu$, then by \eqref{supinf} and
\eqref{spppro}, the polynomial $y\mapsto p(x,y)$
vanishes on an infinite subset of $\rbb$, and
consequently $p(x,y)=0$ for all $x\in \supp \mu$ and
$y\in \rbb$. Hence, for every $y\in \rbb$, the
polynomial $x \mapsto p(x,y)$ vanishes on an infinite
subset of $\rbb$, and thus $p(x,y)=0$ for all $x\in
\rbb$. As a consequence, $p=0$. Suppose, contrary to
$3^{\circ}$, that there exists a representing measure
$\varrho$ for $\sbold\otimes\tbold$ such the Zariski
closure of $\supp \varrho$ is a proper subset of
$\rbb^2$. This means that there exists a nonzero
polynomial $q\in \cbb[x,y]$ such that the measure
$\varrho$ is supported in the real algebraic set
$S=\{(x,y)\in \rbb^2\colon q(x,y)=0\}$. Since
Proposition \ref{inv_supp} remains valid for the
two-dimensional Hamburger moment problem, we infer
that any representing measure for
$\sbold\otimes\tbold$ must be supported in $S$. In
particular, this should hold for $\mu\otimes \nu$,
which is a contradiction. This yields $3^{\circ}$ and
completes the proof.
   \end{proof}
Now, we turn back to the complex case. Recall that
there is a one-to-one correspondence between the set
of all two-dimensional Hamburger moment sequences
$\abold=\{a_{m,n}\}_{m,n=0}^\infty$ and the set of all
complex moment sequences
$\gammab=\{\gamma_{m,n}\}_{m,n=0}^\infty$ given by
   \begin{align} \label{baba}
\gamma_{m,n} = \int_{\rbb^2} (x+\I y)^m (x-\I y)^n \D
\varrho(x,y) = \sum_{k,l\Ge 0} \alpha_{k,l}^{m,n}
a_{k,l}, \quad (m,n)\in \nfr,
   \end{align}
where $\varrho$ is a representing measure for $\abold$
and $\{\alpha_{k,l}^{m,n}\}_{k,l=0}^{\infty}$,
$(m,n)\in \nfr$, are systems of complex numbers (each
with finitely many nonzero entries) uniquely
determined by the equations
   \begin{align*}
(x+\I y)^m (x-\I y)^n = \sum_{k,l\Ge 0}
\alpha_{k,l}^{m,n} x^ky^l, \quad x,y\in \rbb.
   \end{align*}
Moreover, the sets of all representing measures for
$\abold$ and the corresponding $\gammab$ coincide. We
refer the reader to \cite[Appendix A]{c-s-s-rh} for
more details. As a consequence, if $\abold =
\sbold\otimes\tbold$, where $\sbold$ and $\tbold$ are
as in Theorem \ref{DC1}, then the corresponding
$\gammab$ satisfies the conditions $1^{\circ}$,
$2^{\circ}$ and $3^{\circ}$ on page
\pageref{Marcypo1}.
   \section{Representing measures on real algebraic sets}
In contrast to the previous section, we will now focus
on the complex moment sequences having representing
measures on real algebraic sets that are different
from~ $\cbb$. The most satisfactory result establishes
a one-to-one correspondence between representing
measures for a complex moment sequence and its
positive definite extensions on $\nfr_+$ in the case
when the mapping $\cbb^* \ni z \mapsto \frac{z}{\bar
z} \in \tbb$ restricted to the algebraic set in
question is injective (see Theorem \ref{0notatom}(v)).
In Example \ref{wiele} we gather some classes of real
algebraic sets meeting this requirement. After
examining the Witch of Agnesi (one of these examples)
we conclude that there is no complex moment
counterpart of the partitioning property of the family
of N-extremal measures as in one-dimensional Hamburger
moment problem (see Proposition \ref{fontan3} and the
discussion preceding it).

It is a well-known fact that a mapping $f\colon X\to
Y$ between nonempty sets $X$ and $Y$ is injective if
and only if the mapping $2^Y \ni \sigma \longmapsto
f^{-1}(\sigma) \in 2^X$ is surjective. Following this,
we say that a Borel mapping $f\colon X \to Y$ between
topological Hausdorff spaces $X$ and $Y$ (i.e., a
mapping such that $f^{-1}(\sigma) \in \borel{X}$ for
all $\sigma \in \borel{Y}$) is {\em Borel injective}
if the related inverse image mapping $\borel{Y} \ni
\sigma \longmapsto f^{-1}(\sigma) \in \borel{X}$ is
surjective. Below, we show that the notions of
injectivity and Borel injectivity coincide for
continuous mappings $f$ defined on $\sigma$-compact
topological Hausdorff spaces.
   \begin{pro} \label{Binj}
Let $f\colon X \to Y$ be a mapping between topological
Hausdorff spaces $X$ and $Y$. Then the following
assertions hold{\em :}
   \begin{enumerate}
   \item[(i)] if $f$ is Borel injective, then $f$
is injective,
   \item[(ii)] if $f$ is
continuous and $X$ is $\sigma$-compact, then $f$ is
Borel injective if and only if it is injective.
   \end{enumerate}
   \end{pro}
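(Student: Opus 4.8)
The plan is to anchor both parts in the set-theoretic equivalence recalled just above the statement, namely that an arbitrary $f\colon X\to Y$ is injective precisely when the full preimage map $2^Y\ni\sigma\mapsto f^{-1}(\sigma)\in 2^X$ is onto. Concretely I will repeatedly use the two elementary facts $f(x)\in\sigma\iff x\in f^{-1}(\sigma)$ and, for injective $f$, $f^{-1}(f(A))=A$ for every $A\subseteq X$; the latter is what converts a prescribed Borel subset of $X$ into a Borel preimage.

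For (i) I would argue directly from closedness of points. Since $X$ is Hausdorff, every singleton $\{x\}$ is closed and hence lies in $\borel{X}$. Assuming $f$ Borel injective, pick $\sigma\in\borel{Y}$ with $\{x_1\}=f^{-1}(\sigma)$; then $x_1\in f^{-1}(\sigma)$ forces $f(x_1)\in\sigma$, so any $x_2$ with $f(x_2)=f(x_1)$ satisfies $x_2\in f^{-1}(\sigma)=\{x_1\}$, i.e.\ $x_2=x_1$. Thus $f$ is injective. This half invokes neither continuity nor $\sigma$-compactness and already delivers the ``Borel injective $\Rightarrow$ injective'' direction needed in (ii).

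For the converse in (ii) the key reduction is the observation that, once $f$ is injective, it suffices to prove $f$ carries Borel sets to Borel sets: given $A\in\borel{X}$, the choice $\sigma\okr f(A)$ then lies in $\borel{Y}$ and $f^{-1}(\sigma)=f^{-1}(f(A))=A$, exhibiting $A$ in the range of the preimage map. (Continuity separately guarantees that $f$ is a Borel mapping, so that the notion of Borel injectivity is applicable.) To prove the image property I would write $X=\bigcup_n K_n$ with each $K_n$ compact, and replace $K_n$ by $K_1\cup\dots\cup K_n$ to make the sequence increasing. Each restriction $f|_{K_n}\colon K_n\to f(K_n)$ is a continuous bijection from a compact space onto a Hausdorff subspace, hence a homeomorphism; such a map identifies $\borel{K_n}$ with $\borel{f(K_n)}$. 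Moreover $f(K_n)$ is compact, therefore closed in $Y$ and Borel, whence $\borel{f(K_n)}=\{B\in\borel{Y}\colon B\subseteq f(K_n)\}$, and likewise $K_n$ is Borel in $X$.

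Assembling these pieces, for $A\in\borel{X}$ the set $A\cap K_n$ is a Borel subset of the subspace $K_n$, so its image $f(A\cap K_n)$ is Borel in $f(K_n)$ and hence in $Y$; since $f(A)=\bigcup_n f(A\cap K_n)$ is a countable union, $f(A)\in\borel{Y}$, completing the reduction. I expect the main obstacle to be precisely this image step: the delicate points are that the compact pieces $K_n$ and their images $f(K_n)$ are genuinely Borel (each via compactness together with the Hausdorff property), that subspace-Borel and ambient-Borel subsets agree on these pieces, and that $\sigma$-compactness is exactly the hypothesis allowing the countably many homeomorphic images to be glued back into the global Borel image $f(A)$.
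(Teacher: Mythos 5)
Your proof is correct, and part (i) as well as the reduction of part (ii) to the claim that $f$ maps Borel sets of $X$ to Borel sets of $Y$ (via $f^{-1}(f(A))=A$ for injective $f$) coincide with the paper's argument; where you genuinely diverge is in how that image property is established. The paper first proves it only for closed sets — a closed $F$ meets each compact $K_n$ in a compact set, so $f(F)=\bigcup_n f(K_n\cap F)$ is a countable union of compacta, hence Borel — and then runs a good-sets argument: the family $\mathcal{A}_f=\{\sigma\in\borel{X}\colon f(\sigma)\in\borel{Y}\}$ is shown to be a $\sigma$-algebra (closure under complements uses injectivity, via $f(X\setminus\sigma)=f(X)\setminus f(\sigma)$) containing all closed sets, hence equal to $\borel{X}$. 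You instead localize to the compact pieces: on each $K_n$ the restriction of $f$ is a continuous bijection from a compact space onto a Hausdorff subspace, hence a homeomorphism, which identifies the subspace Borel structures; together with the trace-$\sigma$-algebra identifications (legitimate because $K_n$ and $f(K_n)$ are compact, hence closed and Borel in $X$ and $Y$ respectively), this handles \emph{all} Borel subsets of $K_n$ at once, and $\sigma$-compactness glues the pieces through $f(A)=\bigcup_n f(A\cap K_n)$. Your route trades the paper's abstract $\sigma$-algebra generation for the compact-to-Hausdorff homeomorphism theorem plus subspace Borel theory: it makes the role of injectivity more geometric and avoids verifying the closure properties of a good-sets family, whereas the paper's version gets by with only the elementary facts that continuous images of compacta are compact and that compacta in Hausdorff spaces are closed. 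Both uses of $\sigma$-compactness are the same in spirit, so the difference is one of mechanism rather than of hypothesis.
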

   \begin{proof}
(i) Suppose that, contrary to our claim, $f(x_1) =
f(x_2)$ for some distinct points $x_1$ and $x_2$ of
$X$. Then the singleton $\{x_1\}$ is closed and so
there exists $\sigma'\in \borel{Y}$ such that
$\{x_1\}=f^{-1}(\sigma')$. However, $x_2 \in
f^{-1}(\sigma')$, which is a contradiction.

(ii) Suppose $f$ is continuous and $X$ is
$\sigma$-compact. Clearly, $f$ is a Borel mapping. In
view of (i), it suffices to prove the ``if'' part.
Assume that $f$ is injective. First, we show that
   \begin{align} \label{closim}
\text{if $F$ is a closed subset of $X$, then $f(F)\in
\borel{Y}$.}
   \end{align}
Indeed, by $\sigma$-compactness of $X$, there exists a
sequence $\{K_n\}_{n=1}^{\infty}$ of compact subsets
of $X$ such that $X=\bigcup_{n=1}^{\infty} K_n$. Since
each $K_n\cap F$ is compact and consequently, by the
continuity of $f$, each $f(K_n\cap F)$ is compact, we
see that
   \begin{align*}
f(F) = f\bigg(\bigcup_{n=1}^{\infty} K_n\cap F\bigg) =
\bigcup_{n=1}^{\infty} f(K_n\cap F) \in \borel{Y},
   \end{align*}
which completes the proof of \eqref{closim}. Set
   \begin{align*}
\mathcal{A}_{f} = \{\sigma \in \borel{X}\colon
f(\sigma) \in \borel{Y}\}.
   \end{align*}
It follows from \eqref{closim} that $X\in
\mathcal{A}_{f}$. Since $f$ is injective and $f(X)\in
\borel{Y}$, we deduce that $X\setminus \sigma \in
\mathcal{A}_{f}$ whenever $\sigma \in
\mathcal{A}_{f}$. Clearly, $\bigcup_{n=1}^{\infty}
\sigma_n \in \mathcal{A}_{f}$ whenever
$\{\sigma_n\}_{n=1}^{\infty} \subset \mathcal{A}_{f}$.
This means that $\mathcal{A}_{f}$ is a
$\sigma$-subalgebra of $\borel{X}$, which, by
\eqref{closim}, contains all the open subsets of $X$.
Hence, $\mathcal{A}_{f}=\borel{X}$, that is
$f(\sigma)\in \borel{Y}$ for every $\sigma \in
\borel{X}$. To prove the Borel injectivity of $f$,
take $\sigma \in \borel{X}$. Since
$\mathcal{A}_{f}=\borel{X}$, we see that $\sigma'\okr
f(\sigma) \in \borel{Y}$. By the injectivity of $f$,
we deduce that $\sigma = f^{-1}(\sigma')$, which
completes the proof of Borel injectivity of~ $f$.
   \end{proof}
   \begin{rem}
   Note that in general injective (or even bijective)
continuous mappings between topological Hausdorff
spaces may not be Borel injective. Indeed, the mapping
$f\colon X \to Y$, where $X$ is the real line equipped
with the discrete topology and $Y$ is the real line
equipped with the Euclidean topology, defined by
$f(x)=x$ for $x\in X$, is bijective and continuous,
but not Borel injective because $\borel{Y}
\varsubsetneq 2^{X}$ (see \cite[Remarks 2.21]{Rud}).
   \hfill$\triangleleft$
   \end{rem}
In this section we will focus on restrictions of the
continuous mapping
   \begin{align} \label{odw-2}
\psi\colon \cbb^* \to \tbb, \quad \psi(z) =
\frac{z}{\bar z} = \bigg(\frac{z}{|z|}\bigg)^2, \quad
z\in \cbb^*.
   \end{align}
For a given Borel measure $\mu$ on $\cbb$ and a
nonempty Borel subset $Z$ of $\cbb^*$, we denote by
$\mu\circ(\psi|_Z)^{-1}$ the transport of the Borel
measure $\mu|_{\borel{Z}}$ via $\psi|_Z\colon Z \to
\tbb$ given by
   \begin{align} \label{dodo2}
(\mu\circ(\psi|_Z)^{-1})(\sigma) \okr
\mu((\psi|_Z)^{-1}(\sigma)), \quad \sigma\in
\borel{\tbb},
   \end{align}
In the context of restrictions of $\psi$, Proposition
\ref{Binj} can be specified as follows.
   \begin{cor} \label{Binj-c}
If $Z$ is a $\sigma$-compact subset of $\cbb$ such
that $0\notin Z$, then the mapping $\psi|_Z$ is Borel
injective if and only if it is injective.
   \end{cor}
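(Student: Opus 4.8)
The plan is to reduce Corollary~\ref{Binj-c} to Proposition~\ref{Binj}(ii), whose hypotheses we need to verify for the mapping $f\okr\psi|_Z\colon Z\to\tbb$. Since the target space $\tbb$ is a topological Hausdorff space and $\psi$ is continuous on $\cbb^*$ by~\eqref{odw-2}, the restriction $\psi|_Z$ is continuous on $Z$ with respect to the subspace topology inherited from $\cbb^*$ (equivalently from $\cbb$). The domain $Z$ is a subspace of the Hausdorff space $\cbb$, hence itself Hausdorff, so both standing assumptions of Proposition~\ref{Binj} are met once we know $Z$ is $\sigma$-compact as a \emph{topological space} in its own right.

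The only point that requires a word of justification is the $\sigma$-compactness of $Z$. By hypothesis $Z$ is a $\sigma$-compact subset of $\cbb$, so there is a sequence $\{K_n\}_{n=1}^{\infty}$ of compact subsets of $\cbb$ with $Z=\bigcup_{n=1}^{\infty}K_n$; since each $K_n\subset Z$ and compactness is an intrinsic property (a compact subset of $\cbb$ contained in $Z$ is compact in the subspace topology of $Z$), the same sequence exhibits $Z$ as a countable union of compact subsets of the topological space $Z$. Thus $Z$ is $\sigma$-compact when regarded as a Hausdorff space on its own, which is exactly the form in which Proposition~\ref{Binj}(ii) requires the hypothesis.

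With these verifications in place, Proposition~\ref{Binj}(ii) applies verbatim to $f=\psi|_Z$ and yields that $\psi|_Z$ is Borel injective if and only if it is injective, which is the assertion of the corollary. I do not expect any genuine obstacle here: the statement is essentially the specialization of Proposition~\ref{Binj} to the concrete continuous mapping $\psi$, and the condition $0\notin Z$ merely guarantees that $\psi|_Z$ is well defined since $\psi$ is only defined on $\cbb^*=\cbb\setminus\nul$. The sole subtlety worth flagging is the passage from ``$\sigma$-compact subset of $\cbb$'' to ``$\sigma$-compact topological space,'' and that passage is immediate because the witnessing compact sets are already contained in $Z$.
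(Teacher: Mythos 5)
Your proof is correct and is essentially the paper's own argument: the paper states Corollary~\ref{Binj-c} as an immediate specialization of Proposition~\ref{Binj}(ii), with no further proof given. Your verifications (continuity of $\psi|_Z$ on $Z\subset\cbb^*$, Hausdorffness, and the intrinsic $\sigma$-compactness of $Z$, with $0\notin Z$ ensuring $\psi|_Z$ is well defined) are exactly the routine checks the paper leaves to the reader.
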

Proposition \ref{Binj} enables us to formulate a
geometric criterion for Borel injectivity of
restrictions of $\psi$.
   \begin{pro} \label{lines}
Let $Z$ be a nonempty subset of $\cbb^*$. Then the
mapping $\psi|_Z$ is injective if and only if the
intersection of $Z$ and any straight line passing
through the origin contains at most one point.
   \end{pro}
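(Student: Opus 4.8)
The plan is to reduce the geometric statement to a single pointwise identity and then unwind the quantifiers. First I would record the key algebraic characterization: for $z_1,z_2\in\cbb^*$ one has $\psi(z_1)=\psi(z_2)$ if and only if $z_1\bar z_2\in\rbb$. Indeed, since $\bar z_1$ and $\bar z_2$ are nonzero, the equation $z_1/\bar z_1=z_2/\bar z_2$ is equivalent, after multiplying through by $\bar z_1\bar z_2$, to $z_1\bar z_2=z_2\bar z_1$; as $z_2\bar z_1=\overline{z_1\bar z_2}$, this says precisely that $z_1\bar z_2$ coincides with its own complex conjugate, i.e.\ $z_1\bar z_2\in\rbb$. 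Every step here is reversible, so the equivalence holds.

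Next I would translate the right-hand (geometric) condition into the same algebraic form. Two nonzero points $z_1,z_2$ lie on a common straight line through the origin exactly when one is a real multiple of the other, that is, when $z_2/z_1\in\rbb$. Writing $z_2/z_1=z_2\bar z_1/|z_1|^2$ and recalling $|z_1|^2>0$, this is equivalent to $z_2\bar z_1\in\rbb$, hence (taking conjugates) to $z_1\bar z_2\in\rbb$. Combining with the previous paragraph gives the clean pointwise equivalence: for $z_1,z_2\in\cbb^*$, $\psi(z_1)=\psi(z_2)$ holds if and only if $z_1$ and $z_2$ lie on one straight line through the origin.

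Finally I would unwind the definitions on both sides. The map $\psi|_Z$ fails to be injective precisely when there exist two distinct points $z_1,z_2\in Z$ with $\psi(z_1)=\psi(z_2)$, which by the pointwise equivalence means that these two distinct points of $Z$ lie on a common line through the origin; that is, some line through the origin meets $Z$ in at least two points. Negating both sides yields exactly the asserted equivalence: $\psi|_Z$ is injective if and only if every straight line through the origin meets $Z$ in at most one point.

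There is essentially no serious obstacle here; the only points requiring a little care are checking that the algebraic manipulations are genuinely reversible (which uses $z_1,z_2\neq 0$) and matching the phrase ``at most one point'' to injectivity via the contrapositive. As an alternative to the identity $z_1\bar z_2\in\rbb$, one could argue through the representation $\psi(z)=(z/|z|)^2$, which shows that $\psi(z_1)=\psi(z_2)$ iff $\arg z_1\equiv\arg z_2\pmod\pi$, the classical description of collinearity with the origin; I prefer the algebraic route, as it avoids the multivaluedness of the argument.
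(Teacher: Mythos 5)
Your proof is correct, and it takes a genuinely different route from the paper's. The paper argues in polar coordinates: for the ``if'' direction it writes $z_1=|z_1|\E^{\I t_1}$, $z_2=|z_2|\E^{\I t_2}$ with the normalization $|t_1-t_2|\Le\pi$, deduces from $\E^{2\I t_1}=\E^{2\I t_2}$ that $t_1=t_2$ or $|t_1-t_2|=\pi$, and concludes collinearity in either case; for the ``only if'' direction it simply notes that $\psi$ is constant on each punctured line $\{r\E^{\I t}\colon r\in\rbb\}\cap\cbb^*$. You instead isolate a single algebraic key lemma --- for $z_1,z_2\in\cbb^*$, $\psi(z_1)=\psi(z_2)$ if and only if $z_1\bar z_2\in\rbb$ --- and observe that the same condition $z_1\bar z_2\in\rbb$ also characterizes collinearity with the origin, after which both implications of the proposition follow at once by negating quantifiers. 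What your approach buys is exactly what you say: it avoids the multivaluedness of the argument and the attendant normalization and two-case analysis ($t_1=t_2$ versus $|t_1-t_2|=\pi$), and it treats the two directions symmetrically through one reversible chain of equivalences rather than by separate arguments. What the paper's approach buys is geometric transparency: the constancy of $\psi$ on lines through the origin is visible at a glance from $\psi(z)=(z/|z|)^2$, and this picture is the one reused informally elsewhere in that section (e.g.\ when discussing which algebraic curves meet every such line at most once). Both arguments are complete and elementary; yours is arguably the cleaner write-up.
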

   \begin{proof}
Suppose the intersection of $Z$ and any straight line
passing through the origin contains at most one point.
Take $z_1,z_2\in Z$ such that $\psi(z_1)=\psi(z_2)$.
Note that there exist $t_1, t_2\in\rbb$, such that
$|t_1-t_2| \Le \pi$, $z_1=|z_1|\E^{\I t_1}$ and
$z_2=|z_2|\E^{\I t_2}$. Since $Z\subset \cbb^*$, we
see that $\E^{2\I t_1} = \E^{2\I t_2}$, which gives
$t_1=t_2$ or $|t_1-t_2|=\pi$. In both cases $z_1$ and
$z_2$ are points of a straight line passing through
the origin, so, by our assumption, $z_1=z_2$. This
proves the injectivity of $\psi|_Z$. The converse
implication follows easily from the fact that $\psi$
is constant on any straight line $\{r\E^{\I t}\colon r
\in \rbb\}$ intersected with $\cbb^*$, where $t\in
[0,\pi)$. This completes the proof.
   \end{proof}
   \begin{cor}
Let $Z\subset \cbb^*$ be a nonempty set. Suppose
$\varDelta$ is a subset of $\rbb$ such that $|t_1-t_2|
< \pi$ for all $t_1,t_2\in \varDelta$, and $r\colon
\varDelta \to (0,\infty)$ is a function for which the
mapping $\phi\colon \varDelta \ni t \longmapsto
r(t)\E^{\I t}\in Z$ is surjective. Then $\phi$ and
$\psi|_Z$ are injective.
   \end{cor}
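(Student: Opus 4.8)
The plan is to reduce both injectivity claims to a single elementary computation of the composition $\psi\circ\phi$. First I would note that, for every $t\in\varDelta$, since $r(t)>0$,
\[
\psi(\phi(t)) = \psi\bigl(r(t)\E^{\I t}\bigr)
= \Bigl(\frac{r(t)\E^{\I t}}{|r(t)\E^{\I t}|}\Bigr)^{2}
= \E^{2\I t},
\]
so that $\psi\circ\phi\colon\varDelta\to\tbb$ is nothing but the map $t\mapsto\E^{2\I t}$.

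Next I would show that this composition is injective. If $\E^{2\I t_1}=\E^{2\I t_2}$ for some $t_1,t_2\in\varDelta$, then $2(t_1-t_2)\in 2\pi\zbb$, hence $t_1-t_2\in\pi\zbb$; the hypothesis $|t_1-t_2|<\pi$ then forces $t_1=t_2$. Thus $\psi\circ\phi$ is injective.

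Both assertions then follow at once. For $\phi$: if $\phi(t_1)=\phi(t_2)$, applying $\psi$ gives $\psi(\phi(t_1))=\psi(\phi(t_2))$, so $t_1=t_2$. For $\psi|_Z$: given $z_1,z_2\in Z$ with $\psi(z_1)=\psi(z_2)$, I would use the surjectivity of $\phi$ to write $z_j=\phi(t_j)$ with $t_j\in\varDelta$; then $\psi(\phi(t_1))=\psi(\phi(t_2))$ yields $t_1=t_2$, whence $z_1=z_2$. Alternatively, since the corollary sits directly under Proposition~\ref{lines}, one could instead verify its geometric hypothesis: surjectivity of $\phi$ together with $r>0$ shows that every point of $Z$ has argument congruent modulo $2\pi$ to some $t\in\varDelta$, so two points of $Z$ lying on a common line through the origin have parameters differing by a multiple of $\pi$, and the strict diameter bound again forces them to coincide; Proposition~\ref{lines} then delivers injectivity of $\psi|_Z$.

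The argument presents no genuine obstacle; the only delicate point, common to both routes, is the precise use of the strict inequality $|t_1-t_2|<\pi$, which is exactly what rules out the degenerate coincidence $t_1-t_2=\pm\pi$ and thereby separates mere equality of $\psi$-values from equality of the points themselves.
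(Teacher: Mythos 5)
Your proposal is correct. Note that the paper itself gives no proof of this corollary: it is placed directly after Proposition \ref{lines}, the intended derivation being the geometric one you sketch as your alternative route (two points of $Z$ on a common line through the origin come, via surjectivity of $\phi$ and $r>0$, from parameters $t_1,t_2\in\varDelta$ with $t_1-t_2\in\pi\zbb$, and the bound $|t_1-t_2|<\pi$ collapses this to $t_1=t_2$). Your primary argument is genuinely different and arguably tidier: the single identity $\psi(\phi(t))=\E^{2\I t}$, injective on $\varDelta$ by the same modular computation, delivers both assertions at once, since injectivity of a composition forces injectivity of the inner map, and, when the inner map is surjective, of the outer map as well. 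This uniform treatment is a small gain over the route through Proposition \ref{lines}, which by itself only concerns $\psi|_Z$ and leaves the injectivity of $\phi$ to essentially the same calculation; conversely, the geometric route has the virtue of exhibiting the corollary as a genuine corollary of the preceding proposition, which is evidently how the authors meant it to be read. Your closing remark about the role of the strict inequality $|t_1-t_2|<\pi$ is exactly the right point to isolate.
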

   Before formulating the main result of this section,
we state a crucial lemma which is in the spirit of
quasi-determinacy (cf.\ Theorem \ref{semi}).
   \begin{lem} \label{chmura2}
If $\gammab\colon \nfr \to \cbb$ is a complex moment
sequence and $(\mu_1,\nu_1)$ and $(\mu_2,\nu_2)$ are
representing pairs for some $\Gammab \in
\PDE(\gammab)$, then
   \begin{align} \label{chmur}
\mu_1\circ\psi^{-1} + \nu_1\circ\varphi^{-1} =
\mu_2\circ\psi^{-1} + \nu_2\circ\varphi^{-1},
   \end{align}
where $\varphi$ and $\psi$ are given by \eqref{odw}
and \eqref{odw-2}, respectively, whereas
$\nu_j\circ\varphi^{-1}$ and $\mu_j\circ\psi^{-1}$ are
Borel measures on $\tbb$ given by \eqref{dodo1} and
\eqref{dodo2}, respectively.
   \end{lem}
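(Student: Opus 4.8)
The plan is to reduce the asserted identity between two finite Borel measures on $\tbb$ to the equality of all their Fourier coefficients, and then to close with the determinacy of the Herglotz moment problem. First I would note that, since $(\mu_1,\nu_1)$ and $(\mu_2,\nu_2)$ are representing pairs for one and the same extension $\Gammab=\{\varGamma_{m,n}\}_{(m,n)\in\nfr_+}$, formula \eqref{repr1} gives, for every $(m,n)\in\nfr_+$,
   \begin{align*}
\int_{\cbb^*} z^m\bar z^n \D\mu_1(z) + \delta_{m+n,0}\int_\tbb z^m\bar z^n \D\nu_1(z) = \int_{\cbb^*} z^m\bar z^n \D\mu_2(z) + \delta_{m+n,0}\int_\tbb z^m\bar z^n \D\nu_2(z).
   \end{align*}
The decisive observation is that the antidiagonal indices already encode the two transport maps: I would specialize to $(m,n)=(k,-k)$ with $k\in\zbb$. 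Since $k+(-k)=0\Ge 0$, every such pair lies in $\nfr_+$ and carries the factor $\delta_{m+n,0}=1$. On $\cbb^*$ one has $z^k\bar z^{-k}=(z/\bar z)^k=\psi(z)^k$, whereas on $\tbb$, where $\bar z=z^{-1}$, one has $z^k\bar z^{-k}=z^{2k}=\varphi(z)^k$. Substituting these into the displayed equality recasts it as
   \begin{align*}
\int_{\cbb^*}\psi(z)^k\D\mu_1(z)+\int_\tbb\varphi(z)^k\D\nu_1(z)=\int_{\cbb^*}\psi(z)^k\D\mu_2(z)+\int_\tbb\varphi(z)^k\D\nu_2(z), \quad k\in\zbb.
   \end{align*}

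Next I would pass to $\tbb$ via the measure transport theorem. Before doing so, I would record that all the measures in play are finite: taking $(m,n)=(0,0)$ in \eqref{repr1} yields $\varGamma_{0,0}=\mu_j(\cbb^*)+\nu_j(\tbb)$, so $\mu_j(\cbb^*),\nu_j(\tbb)\Le\varGamma_{0,0}<\infty$; consequently the transports $\mu_j\circ\psi^{-1}$ and $\nu_j\circ\varphi^{-1}$ are finite Borel measures on $\tbb$, and the integrands $\psi^k$, $\varphi^k$, being of modulus $1$, raise no integrability issue. The identities $\int_\tbb w^k\D(\mu_j\circ\psi^{-1})(w)=\int_{\cbb^*}\psi(z)^k\D\mu_j(z)$ and $\int_\tbb w^k\D(\nu_j\circ\varphi^{-1})(w)=\int_\tbb\varphi(z)^k\D\nu_j(z)$ then turn the previous display into
   \begin{align*}
\int_\tbb w^k\D\big(\mu_1\circ\psi^{-1}+\nu_1\circ\varphi^{-1}\big)(w)=\int_\tbb w^k\D\big(\mu_2\circ\psi^{-1}+\nu_2\circ\varphi^{-1}\big)(w),\quad k\in\zbb.
   \end{align*}

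Finally, the two finite Borel measures on $\tbb$ standing on the sides of \eqref{chmur} now have identical integrals against every monomial $w^k$, $k\in\zbb$, i.e.\ identical Fourier coefficients; by the determinacy of the Herglotz moment problem (the very tool used at the end of the proof of Theorem~\ref{semi}) they must coincide, which is precisely \eqref{chmur}. I do not anticipate a genuine obstacle here. The entire content of the argument is the recognition that the monomials $z^m\bar z^n$ with $m+n=0$ are exactly the pullbacks $\psi^m$ on $\cbb^*$ and $\varphi^m$ on $\tbb$; the only point that demands a little care is the finiteness check, which is what legitimizes both the measure transport step and the appeal to Herglotz uniqueness.
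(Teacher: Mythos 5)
Your proposal is correct and follows essentially the same route as the paper's proof: restricting \eqref{repr1} to the antidiagonal indices $(m,-m)$, recognizing $z^m\bar z^{-m}$ as $\psi(z)^m$ on $\cbb^*$ and $\varphi(z)^m$ on $\tbb$, transporting both measures to $\tbb$, and concluding by the determinacy of the Herglotz moment problem. The explicit finiteness check via $\varGamma_{0,0}$ is a harmless addition the paper leaves implicit.
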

   \begin{proof}
It follows from the measure transport theorem that
\allowdisplaybreaks
   \begin{align*}
\int_{\cbb^*} z^m \bar z^{-m} \D\mu_j(z) &
\overset{\eqref{odw-2}}= \int_{\cbb^*} \psi(z)^{m}
\D\mu_j(z)
   \\
& = \int_{\tbb} z^{m} \D(\mu_j\circ \psi^{-1})(z),
\quad m = 0, \pm 1,\pm 2, \ldots, \, j=1,2,
   \end{align*}
and similarly
   \begin{align*}
\int_{\tbb} z^m \bar z^{-m} \D\nu_j(z) = \int_{\tbb}
z^{m} \D(\nu_j\circ \varphi^{-1})(z), \quad m = 0, \pm
1,\pm 2, \ldots, \, j=1,2,
   \end{align*}
which implies that
   \begin{align*}
\varGamma_{m,-m} \overset{ \eqref{repr1}} =
\int_{\tbb} z^m \D(\mu_j\circ \psi^{-1} + \nu_j\circ
\varphi^{-1})(z), \quad m = 0, \pm 1,\pm 2, \ldots, \,
j=1,2.
   \end{align*}
Hence, by the determinacy of the Herglotz moment
problem (see Section \ref{Sec3}), the condition
\eqref{chmur} holds. This completes the proof.
   \end{proof}
In what follows:
   \begin{itemize}
   \item $\psi_p\okr \psi|_{\zcal_p}$ whenever $p\in
\cbb[z,\bar z]$ is such that $\zcal_p \neq
\varnothing$ and $0\notin \zcal_p$,
   \item $\mathcal M(\gammab)$ stands for the set
of all representing measures for a complex moment
sequence $\gammab$ on $\nfr$.
   \end{itemize}
We are now in a position to prove the main result of
this section.
   \begin{thm} \label{0notatom} Let
$p\in\cbb[z,\bar z]$ be a polynomial such that
$\zcal_p \neq \varnothing$ and $0\notin \zcal_p$.
Assume that $\gammab\colon \nfr \to \cbb$ is a complex
moment sequence which has a representing measure
supported in $\zcal_p$. Then the following assertions
hold{\em :}
   \begin{enumerate}
   \item[(i)] if $\mu\in \mathcal M(\gammab)$, then
$\supp\mu\subset \zcal_p$,
$\Gammab(\mu)=\{\varGamma_{m,n}(\mu)\}_{(m,n)\in
\nfr_+} \in \PDE(\gammab)$ and $(\mu,0)$ is a
representing pair for $\Gammab(\mu)$, where
   \begin{align*}
\varGamma_{m,n}(\mu) = \int_{\cbb^*} z^m\bar z^n
\D\mu(z), \quad (m,n)\in \nfr_+,
   \end{align*}
   \item[(ii)] if $(\mu,\nu)$ is a representing pair for
some $\Gammab\in \PDE(\gammab)$, then $\mu \in
\mathcal M(\gammab)$ and $\nu=0$,
   \item[(iii)] if $(\mu_1,0)$ and $(\mu_2,0)$ are
representing pairs for some $\Gammab\in
\PDE(\gammab)$, then
   \begin{align} \label{ajjaj}
\mu_1\circ\psi_p^{-1} = \mu_2\circ\psi_p^{-1},
   \end{align}
   \item[(iv)] if $Z$ is a nonempty closed subset of $\zcal_p$
such that $\psi|_Z$ is injective, then the mapping
$\mathcal M_Z(\gammab) \ni \mu \longmapsto
\Gammab(\mu)\in \PDE(\gammab)$ is injective, where
$\mathcal M_Z(\gammab)=\{\mu\in \mathcal M(\gammab)
\colon \supp \mu \subset Z\}$,
   \item[(v)] if $\psi_p$ is injective, then
   \begin{itemize}
   \item the mapping $\mathcal M(\gammab) \ni \mu \longmapsto
\Gammab(\mu)\in \PDE(\gammab)$ is bijective,
   \item every $\Gammab \in \PDE(\gammab)$ is determinate,
   \item $\PDE(\gammab)$ is of cardinality continuum whenever
$\gammab$ is indeterminate.
   \end{itemize}
   \end{enumerate}
   \end{thm}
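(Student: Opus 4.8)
The plan is to build the five assertions sequentially, since each one leans on the previous. The foundation is Proposition \ref{inv_supp}: because $\gammab$ has one representing measure supported in the real algebraic set $\zcal_p$, every representing measure is supported there too. This immediately gives the support claim in (i). For the rest of (i), since $0\notin \zcal_p$, any $\mu\in\mathcal M(\gammab)$ satisfies $\mu(\nul)=0$, so by Lemma \ref{repr2}(ii) the pair $(\mu,0)$ is a representing pair for a positive definite extension, which is exactly $\Gammab(\mu)$ defined by the displayed formula. The converse direction (ii) is the same observation run backwards: given a representing pair $(\mu,\nu)$ for some $\Gammab\in\PDE(\gammab)$, Lemma \ref{repr2}(i) tells us $\mu+\nu(\tbb)\delta_0\in\mathcal M(\gammab)$; invoking Proposition \ref{inv_supp} forces this measure to be supported in $\zcal_p$, and since $0\notin\zcal_p$ while $\delta_0$ is supported at $0$, the only way to reconcile this is $\nu(\tbb)=0$, i.e.\ $\nu=0$, whence $\mu\in\mathcal M(\gammab)$.

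For (iii) I would feed the hypothesis directly into Lemma \ref{chmura2}: with $\nu_1=\nu_2=0$ the identity \eqref{chmur} collapses to $\mu_1\circ\psi^{-1}=\mu_2\circ\psi^{-1}$. Since both $\mu_1,\mu_2$ are supported in $\zcal_p$ (by (ii) and Proposition \ref{inv_supp}), the transports via the full $\psi$ and via the restriction $\psi_p=\psi|_{\zcal_p}$ coincide, giving \eqref{ajjaj}. Assertion (iv) is where the injectivity hypothesis on $\psi|_Z$ enters for the first time. The map $\mu\mapsto\Gammab(\mu)$ is well defined by (i); to see it is injective on $\mathcal M_Z(\gammab)$, suppose $\Gammab(\mu_1)=\Gammab(\mu_2)$ with both measures supported in $Z$. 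Then $(\mu_1,0)$ and $(\mu_2,0)$ are representing pairs for the same extension, so (iii) yields $\mu_1\circ\psi^{-1}=\mu_2\circ\psi^{-1}$. Since $\psi|_Z$ is injective and $Z$ is a closed (hence $\sigma$-compact, being a closed subset of $\cbb$) subset of $\cbb^*$, Corollary \ref{Binj-c} upgrades this to Borel injectivity, and the transport of a measure under a Borel-injective map determines the measure — so $\mu_1=\mu_2$.

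Finally (v) assembles everything under the global hypothesis that $\psi_p$ itself is injective. Surjectivity of $\mu\mapsto\Gammab(\mu)$ is precisely assertion (ii) (every extension arises from some representing measure with $\nu=0$), while injectivity is (iv) applied with $Z=\zcal_p$, whose closedness I would note follows from the continuity of $p$; together these give the claimed bijection. Determinacy of each $\Gammab\in\PDE(\gammab)$ then follows because a representing pair for $\Gammab$ must, by (ii), have the form $(\mu,0)$, and injectivity of the correspondence pins down $\mu$ uniquely. The cardinality statement is the cleanest: the bijection transfers the cardinality of $\mathcal M(\gammab)$ to $\PDE(\gammab)$, and an indeterminate complex moment sequence has continuum many representing measures (the convex-combination argument, as in the proof of Proposition \ref{snu2}, produces at least $\boldsymbol{\mathfrak c}$, while the ambient space caps it at $\boldsymbol{\mathfrak c}$). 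The main obstacle I anticipate is the measure-theoretic step in (iv): one must verify carefully that equality of transports under a genuinely Borel-injective map implies equality of the measures themselves, which is exactly why Borel injectivity — not mere injectivity — is needed, and why Corollary \ref{Binj-c} is invoked rather than Proposition \ref{lines} alone.
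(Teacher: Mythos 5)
Your proposal is correct and follows essentially the same route as the paper's own proof: assertion by assertion, it invokes Proposition \ref{inv_supp} and Lemma \ref{repr2} for (i)--(ii), Lemma \ref{chmura2} plus the support localization for (iii), Corollary \ref{Binj-c} and the Borel-injectivity argument for (iv), and the bijection-plus-convexity cardinality argument for (v), exactly as the authors do. The one step you flag as delicate --- that equality of transports under a Borel-injective map forces equality of the measures --- is indeed the point of Borel injectivity, and your handling of it matches the paper's.
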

   \begin{proof}
(i) Suppose $\mu\in \mathcal M(\gammab)$. By
Proposition \ref{inv_supp}, $\mu$ is supported in
$\zcal_p$. Since, by our assumption $0\notin \zcal_p$,
we deduce that $\Gammab(\mu) \in \PDE(\gammab)$ and
$(\mu,0)$ is a representing pair for $\Gammab(\mu)$
(see Lemma \ref{repr2}).

(ii) Assume that $(\mu,\nu)$ is a representing pair
for some $\Gammab\in \PDE(\gammab)$. It follows from
Lemma \ref{repr2} that $\mu + \nu(\mathbb T)\delta_0
\in \mathcal M(\gammab)$, and consequently by (i) and
the assumption that $0\notin \zcal_p$, we have
   \begin{align*}
\nu(\mathbb T) = (\mu + \nu(\mathbb T)\delta_0)(\{0\})
= 0.
   \end{align*}
This implies that $\nu=0$ and thus, by \eqref{repr1},
$\mu \in \mathcal M(\gammab)$.

   (iii) Assume that $(\mu_1,0)$ and $(\mu_2,0)$ are
representing pairs for some $\Gammab\in
\PDE(\gammab)$. Then, by Lemma \ref{chmura2},
$\mu_1\circ\psi^{-1} = \mu_2\circ\psi^{-1}$. By (i)
and (ii), the measures $\mu_1$ and $\mu_2$ are
supported in $\zcal_p$. Therefore, $\mu_j\circ
\psi^{-1} = \mu_j\circ \psi_p^{-1}$ for $j=1,2$, which
yields \eqref{ajjaj}.

(iv) Assume that $Z$ is a nonempty closed subset of
$\zcal_p$ such that $\psi|_Z$ is injective. Since $Z$
is $\sigma$-compact and $0\notin Z$, we infer from
Corollary \ref{Binj-c} that the mapping $\psi|_Z$ is
Borel injective. First note that by (i),
$\Gammab(\mu)\in \PDE(\gammab)$ for every $\mu \in
\mathcal M(\gammab)$. If $\mu_1,\mu_2 \in \mathcal
M_Z(\gammab)$ are such that $\Gammab(\mu_1) =
\Gammab(\mu_2)$, then by (i) and (iii),
$\mu_1\circ\psi_p^{-1} = \mu_2\circ\psi_p^{-1}$. Since
the measures $\mu_1$ and $\mu_2$ are supported in $Z$,
we deduce that $\mu_j\circ \psi_p^{-1} = \mu_j\circ
(\psi|_Z)^{-1}$ for $j=1,2$. Hence,
$\mu_1\circ(\psi|_Z)^{-1} = \mu_2\circ(\psi|_Z)^{-1}$,
which by the Borel injectivity of $\psi|_Z$ leads to
$\mu_1=\mu_2$. As a consequence, the mapping $\mathcal
M_Z(\gammab) \ni \mu \longmapsto \Gammab(\mu)\in
\PDE(\gammab)$ is injective.

(v) Assume that $\psi_p$ is injective. It follows from
(i) and (iv) that the mapping $\mathcal M(\gammab) \ni
\mu \longmapsto \Gammab(\mu)\in \PDE(\gammab)$ is
injective. On the other hand, by (ii), the second term
in \eqref{repr1} must be zero whenever $\Gammab\in
\PDE(\gammab)$ and $(\mu,\nu)$ is a representing pair
for $\Gammab$. This yields the surjectivity of the
mapping $\mathcal M(\gammab) \ni \mu \longmapsto
\Gammab(\mu)\in \PDE(\gammab)$. Therefore, it is a
bijection. This together with (ii) implies that every
$\Gammab \in \PDE(\gammab)$ is determinate. To prove
the last assertion in (v) assume that $\gammab$ is
indeterminate. Since the set $\mathcal M(\gammab)$ is
a convex subset of the set of all Borel measures on
$\cbb$ and $\mathcal M(\gammab)$ is not a one point
set, we see that the cardinality of $\mathcal
M(\gammab)$ is at least continuum. Combined with the
injectivity of $\mathcal M(\gammab) \ni \mu
\longmapsto \Gammab(\mu)\in \PDE(\gammab)$, this
implies that the set $\PDE(\gammab)$ is of cardinality
at least $\boldsymbol{\mathfrak c}$. Since
$\PDE(\gammab) \subset \cbb^{\nfr_+}$ and the
cardinality of $\cbb^{\nfr_+}$ is
$\boldsymbol{\mathfrak c}$, the proof is complete.
   \end{proof}
   \begin{cor} \label{nullab}
Let $p\in\cbb[z,\bar z]$ be a polynomial such that
$\zcal_p\neq \varnothing$, $0 \notin \zcal_p$ and
$\psi_p$ is injective. Suppose $\gammab\colon \nfr \to
\cbb$ is a complex moment sequence having a
representing measure supported in $\zcal_p$. If
$\PDE(\gammab)=\{\Gammab\}$, then $\gammab$ is
determinate.
   \end{cor}
Regarding the assertion (v) of Theorem \ref{0notatom},
it is advisable to know for which polynomials $p$, the
mapping $\psi_p$ is injective. It is easily seen that
the injectivity property of $\psi_p$ fails to hold for
most plane algebraic curves, including circles,
ellipses, hyperbolas, parabolas, lemniscates, etc.
However, we may indicate several polynomials $p$ for
which $\psi_p$ is injective. For convenience, in
Example \ref{wiele} below we use the two real variable
description of real algebraic sets.
   \begin{exa} \label{wiele}
The mapping $\psi_p$ is injective in any of the
following cases:
   \begin{itemize}
   \item[\ding{192}] $p(x,y) = a x + b y - c$, where $a,b,c \in \rbb$,
$a^2 + b^2 > 0$ and $c\neq 0$ (a straight line which
does not contain the origin);
   \item[\ding{193}] $p(x,y) = (y-y_0)^l - a x^{2k}$, where
$k$ is a nonnegative integer, $l>2k$ is an odd
integer, $a>0$ and $y_0 > 0$ (for $k=1$ and $l=3$ this
is a shifted {\em Neil's semicubical parabola}, cf.\
\cite[p.\ 93]{B-S-M-M}, \cite[p.\ 5]{Kunz});
   \item[\ding{194}] $p(x,y) = y(x^2+a) -
b$, where $a,b>0$ (a generalized {\em Witch of
Agnesi}, cf.\ \cite[p.\ 94]{B-S-M-M});
   \item[\ding{195}]  $p(x,y) = ((x-x_0)^2 + y^2)(x-x_0) - 2ay^2$,
where $a>0$ and $x_0 > 0$ (a shifted {\em cissoid of
Diocles}, cf.\ \cite[p.\ 95]{B-S-M-M}, \cite[p.\
5]{Kunz});
   \item[\ding{196}] $p(x,y) = y^l x^{2k} - a$, where $k$ is a
nonnegative integer, $l$ is an odd positive integer
and $a \in \rbb\setminus \{0\}$.
   \end{itemize}
The injectivity of $\psi_p$ will be deduced from
Proposition \ref{lines} by verifying that the
intersection of $\zcal_p$ and any straight line
passing through the origin contains at most one point.

The case \ding{192} is obvious. Let $p$ be as in
\ding{193}. Then $\zcal_p$ is located in the upper
half-plane. The case of the line $x=0$ is plain. Since
the set $\zcal_p \cap \{z\in \cbb\colon \RE(z)\Ge 0\}$
is the graph of a strictly increasing concave function
on the interval $[0,\infty)$ whose value at $0$ is
positive, it intersects the line $y=cx$ in exactly one
point whenever $c>0$. The case $c<0$ follows by the
symmetry of $\zcal_p$ with respect to the reflection
across the line $x=0$.

Suppose $p$ is as in \ding{194}. Then $\zcal_p$ is
contained in the upper half-plane. The case of the
line $x=0$ is trivial. Since the set $\zcal_p \cap
\{z\in \cbb\colon \RE(z)\Ge 0\}$ is the graph of a
strictly decreasing positive function on the interval
$[0,\infty)$, it intersects the line $y=cx$ in exactly
one point whenever $c>0$. As above, the case $c<0$
follows by the symmetry of $\zcal_p$ with respect to
the reflection across the line $x=0$.

Let $p$ be as in \ding{195}. This time $\zcal_p$ is a
subset of the right half-plane. Again, the case of the
line $y=0$ is obvious. Since the set $\zcal_p \cap
\{z\in \cbb\colon \IM(z)\Ge 0\}$ is the graph of a
strictly increasing convex function on the interval
$[x_0,x_0+2a)$ that vanishes at $x_0$, it intersects
the line $y=cx$ in exactly one point whenever $c>0$.
The case $c<0$ follows by the symmetry of $\zcal_p$
with respect to the reflection across the line $y=0$.

Finally, the case when $p$ is as in \ding{196} is
straightforward.
   \hfill$\triangleleft$
   \end{exa}
Regarding Theorem \ref{0notatom}(iv), we note that
though in general the mapping $\psi$ given by
\eqref{odw-2} is not injective on plane algebraic
curves, it becomes such on appropriately chosen parts
of them, e.g., one branch of a hyperbola, an arc of a
parabola, etc. In turn, Proposition \ref{inv_supp}
which helps to localize the supports of representing
measures of a complex moment sequence on a real
algebraic set can be enforced with the help of
Proposition \ref{fontan} as follows (cf.\ Remark
\ref{reM-2}).
   \begin{pro}\label{fontan2}
If $\gammab=\{\gamma_{m,n}\}_{(m,n)\in \nfr}$ is a
complex moment sequence which has a representing
measure $\mu$ supported in a real algebraic set $Z$
such that the set $\pi_1(\supp\mu)$ $($resp.,
$\pi_2(\supp\mu)$$)$ is bounded, then
   \begin{align*}
\supp \tilde \mu \subset Z \cap
\big(\overline{\pi_1(\supp\mu)}\times \rbb\big) \quad
\Big(\text{resp., } \supp \tilde \mu \subset Z \cap
\big(\rbb \times \overline{\pi_2(\supp\mu)}\big)\Big)
   \end{align*}
for any representing measure $\tilde \mu$ for
$\gammab$, where $\pi_1$ and $\pi_2$ are as in
\eqref{abold}.
   \end{pro}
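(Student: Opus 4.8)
The plan is to reduce the statement to the two-dimensional Hamburger moment setting, where Propositions \ref{inv_supp} and \ref{fontan} combine directly. By symmetry it suffices to treat the case in which $\pi_1(\supp\mu)$ is bounded; the case of bounded $\pi_2(\supp\mu)$ is identical after exchanging the roles of the two coordinate projections.

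First I would record the algebraic-set containment, which is immediate and handles the factor $Z$ in the conclusion. Since $\mu$ is a representing measure for $\gammab$ supported in the real algebraic set $Z$, Proposition \ref{inv_supp} gives at once that $\supp\tilde\mu\subset Z$ for every representing measure $\tilde\mu$ for $\gammab$.

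Next I would pass to the two-dimensional Hamburger moment sequence $\abold=\{a_{m,n}\}_{m,n=0}^\infty$ corresponding to $\gammab$ through the identification \eqref{baba}. The crucial point, recalled after Theorem \ref{DC1}, is that the sets of representing measures for $\gammab$ and for $\abold$ coincide; in particular both $\mu$ and $\tilde\mu$ are representing measures for $\abold$, viewed as Borel measures on $\rbb^2\cong\cbb$. Because $\pi_1(\supp\mu)$ is bounded, Proposition \ref{fontan}(i) shows that the marginal Hamburger moment sequence $\{a_{m,0}\}_{m=0}^\infty$ is determinate. Feeding this determinacy into Proposition \ref{fontan}(ii), applied with $\tilde\varrho=\tilde\mu$ and $\varrho=\mu$, yields the inclusion $\supp\tilde\mu\subset\overline{\pi_1(\supp\mu)}\times\rbb$.

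Finally I would intersect the two containments $\supp\tilde\mu\subset Z$ and $\supp\tilde\mu\subset\overline{\pi_1(\supp\mu)}\times\rbb$ to obtain the asserted inclusion $\supp\tilde\mu\subset Z\cap\big(\overline{\pi_1(\supp\mu)}\times\rbb\big)$. I do not anticipate a genuine obstacle: every ingredient is already established, so the argument is essentially bookkeeping. The only step requiring a little care is matching the marginal $\{a_{m,0}\}$ (rather than $\{a_{0,n}\}$) to the correct projection $\pi_1$ (rather than $\pi_2$) under the identification $z=x+\I y$, and confirming that the determinacy hypothesis of Proposition \ref{fontan}(ii) is exactly what Proposition \ref{fontan}(i) supplies.
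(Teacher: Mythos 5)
Your proposal is correct and follows exactly the route the paper intends: the paper states Proposition \ref{fontan2} without a written-out proof, presenting it as Proposition \ref{inv_supp} ``enforced with the help of Proposition \ref{fontan}'' via the identification of representing measures for $\gammab$ with those for the associated two-dimensional Hamburger sequence \eqref{baba}. Your argument — $\supp\tilde\mu\subset Z$ from Proposition \ref{inv_supp}, determinacy of the marginal $\{a_{m,0}\}_{m=0}^\infty$ from Proposition \ref{fontan}(i), the inclusion $\supp\tilde\mu\subset\overline{\pi_1(\supp\mu)}\times\rbb$ from Proposition \ref{fontan}(ii), and then intersecting — is precisely that bookkeeping, carried out correctly.
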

Proposition \ref{fontan2} can be applied e.g.\ to the
Witch of Agnesi (see Example \ref{wiele}\ding{194}).
We will show that for such a plane algebraic curve
there is no analogue of an N-extremal measure in the
following sense. Recall that a representing measure
$\mu$ of an indeterminate Hamburger moment sequence is
said to be {\em N-extremal} if complex polynomials are
dense in $L^2(\mu)$. The supports of N-extremal
measures of an indeterminate Hamburger moment sequence
have remarkable properties, namely they are infinite,
have no accumulation points in $\rbb$ and form a
partition of $\rbb$ (see \cite[Theorem 2.13]{Sh-Tam};
see also \cite{sim}). As shown in Proposition
\ref{fontan3} below, this is no longer true for
supports of representing measures of a complex moment
sequence provided at least one of them is contained in
the Witch of Agnesi and has no accumulation point
therein. An analogue of Proposition \ref{fontan3} can
also be formulated and proved for a shifted cissoid of
Diocles (see Example \ref{wiele}\ding{195}). We leave
the details to the reader.

Below, for brevity, we write $x_n \nearrow \infty$
(resp., $x_n \searrow 0$) if $\{x_n\}_{n=1}^{\infty}$
is a strictly increasing (resp., strictly decreasing)
sequence in $\rbb$ which converges to $\infty$ (resp.,
~$0$).
   \begin{pro} \label{fontan3}
Let $\gammab=\{\gamma_{m,n}\}_{(m,n)\in \nfr}$ be a
complex moment sequence with a representing measure
$\mu$ supported in $\zcal_p$, where $p$ is as in
Example {\em \ref{wiele}}\ding{194}. Assume that
$\supp \mu$ is infinite and has no accumulation points
in $\zcal_p$. Then there exists a sequence
$\{z_n\}_{n=1}^{\infty} \subset \zcal_p$ such that
   \begin{enumerate}
   \item[(i)] $0 \Le \RE(z_n) \nearrow \infty$,
   \item[(ii)] for any representing
measure $\tilde \mu$ of $\gammab$,
   \begin{gather} \label{ne5}
\tilde \mu(\{z_n, - \bar z_n\}) = \mu(\{z_n,- \bar
z_n\}) > 0, \quad n\Ge 1,
   \\ \label{ne6}
\supp \tilde \mu \subset \{z_n\colon n\Ge 1\} \cup \{-
\bar z_n\colon n\Ge 1\}.
   \end{gather}
   \end{enumerate}
   \end{pro}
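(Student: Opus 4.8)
The plan is to exploit the special geometry of the Witch of Agnesi. Since $p(x,y)=y(x^{2}+a)-b$ with $a,b>0$, the set $\zcal_p$ is the graph $y=b/(x^{2}+a)$; it lies in the open upper half-plane, is symmetric under the reflection $z\mapsto-\bar z$ (which sends $x+\I y$ to $-x+\I y$), misses the real axis (so $0\notin\zcal_p$), and has bounded imaginary part $\IM z\in(0,b/a]$. I would first record the fiber structure of $\pi_2$ (the imaginary part, $\pi_2(x+\I y)=y$): each horizontal line $\{\IM z=y_0\}$ meets $\zcal_p$ in the two symmetric points $\pm\sqrt{b/y_0-a}+\I y_0$ when $0<y_0<b/a$, in the single apex $\I b/a$ when $y_0=b/a$, and nowhere otherwise. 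Thus the fibers of $\pi_2|_{\zcal_p}$ are exactly the pairs $\{z,-\bar z\}$.

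Next I would analyse $\supp\mu$. Parametrising $\zcal_p$ by $x\in\rbb$ identifies it with a copy of $\rbb$; since $\supp\mu$ is infinite and has no accumulation point in $\zcal_p$, it is a closed discrete (hence countable) set whose points escape to infinity in $x$, so their imaginary parts tend to $0$. In particular $\mu$ is purely atomic with positive mass at each support point, and $Y\okr\pi_2(\supp\mu)$ is a countably infinite subset of $(0,b/a]$ whose only accumulation point is $0$; enumerate it as $y^{(1)}>y^{(2)}>\cdots$ with $y^{(n)}\searrow 0$. I then define $z_n$ to be the point of $\zcal_p$ with $\IM z_n=y^{(n)}$ and $\RE z_n\Ge 0$, i.e. $z_n=\sqrt{b/y^{(n)}-a}+\I y^{(n)}$, so that $\{z_n,-\bar z_n\}=\zcal_p\cap\{\IM z=y^{(n)}\}$ (a singleton exactly when $y^{(n)}=b/a$). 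Because $y^{(n)}\searrow 0$ strictly, $\RE z_n=\sqrt{b/y^{(n)}-a}$ increases strictly to $\infty$, which is assertion (i).

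The heart of the argument is that the imaginary-part marginal is determinate. Passing through the correspondence \eqref{baba} to the two-dimensional Hamburger picture $\abold$ with representing measure $\mu$, Lemma \ref{abol} identifies $\{a_{0,n}\}_{n=0}^\infty$ as the Hamburger moment sequence of $\mu\circ\pi_2^{-1}$; since $\pi_2(\supp\mu)\subset(0,b/a]$ is bounded, Proposition \ref{fontan}(i) shows $\{a_{0,n}\}$ is determinate, so by the proof of Proposition \ref{fontan}(ii) every representing measure $\tilde\mu$ of $\gammab$ satisfies $\tilde\mu\circ\pi_2^{-1}=\mu\circ\pi_2^{-1}$. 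Writing $\mu_2$ for this common marginal, atomicity of $\mu$ gives $\mu_2=\sum_n\mu_2(\{y^{(n)}\})\delta_{y^{(n)}}$ with every $\mu_2(\{y^{(n)}\})>0$. By Proposition \ref{inv_supp} one has $\supp\tilde\mu\subset\zcal_p$, so all $\tilde\mu$-mass on the line $\{\IM z=y^{(n)}\}$ sits on $\{z_n,-\bar z_n\}$; hence $\tilde\mu(\{z_n,-\bar z_n\})=(\tilde\mu\circ\pi_2^{-1})(\{y^{(n)}\})=\mu_2(\{y^{(n)}\})=\mu(\{z_n,-\bar z_n\})>0$, which is \eqref{ne5}. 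For \eqref{ne6} I would invoke Proposition \ref{fontan2} with $Z=\zcal_p$: boundedness of $\pi_2(\supp\mu)$ yields $\supp\tilde\mu\subset\zcal_p\cap(\rbb\times\overline{\pi_2(\supp\mu)})=\zcal_p\cap(\rbb\times(Y\cup\{0\}))$, and since $\zcal_p$ avoids the real axis this equals $\bigcup_n\{z_n,-\bar z_n\}$.

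The step I expect to require the most care is the determinacy of the imaginary-part marginal together with the bookkeeping that every representing measure may redistribute mass only within each fiber $\{z_n,-\bar z_n\}$ while preserving the fiber's total mass; this is exactly what shows that, unlike the N-extremal measures of an indeterminate Hamburger sequence, the representing measures here cannot partition their supports. A minor point to handle cleanly is the apex: when $b/a\in Y$ one has $z_1=-\bar z_1=\I b/a$, so the pair degenerates to a single point, but the equalities in \eqref{ne5} and the inclusion \eqref{ne6} remain valid verbatim.
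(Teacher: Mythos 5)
Your proposal is correct and takes essentially the same route as the paper's proof: you construct the $z_n$ by symmetrizing $\supp\mu$ across the imaginary axis (exploiting that the fibers of $\pi_2$ over $\zcal_p$ are exactly the pairs $\{z,-\bar z\}$), obtain \eqref{ne6} from Proposition \ref{fontan2} via the boundedness of $\pi_2(\supp\mu)$, and obtain \eqref{ne5} from the equality of $\pi_2$-marginals given by Lemma \ref{abol} and Proposition \ref{fontan} together with the fiber structure. Your write-up is in fact somewhat more explicit than the paper's (which leaves the construction of $\{z_n\}$ and the discreteness/atomicity analysis as ``one can show''), and your remark that the marginal equality comes from the \emph{proof} of Proposition \ref{fontan}(ii), not just its statement, is an accurate reading of what is being used.
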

   \begin{proof}
Since $\supp \mu$ is infinite and has no accumulation
points in $\zcal_p$, one can show that there exists a
sequence $\{z_n\}_{n=1}^{\infty} \subset \zcal_p$ such
that
   \begin{gather} \label{ne1}
0 \Le \RE(z_n) \nearrow \infty,
   \\ \label{ne2}
\mu(\{z_n,- \bar z_n\}) > 0, \quad n\Ge 1,
   \\ \label{ne3}
\supp \mu \subset \{z_n\colon n\Ge 1\} \cup \{- \bar
z_n\colon n\Ge 1\}.
   \end{gather}
In fact, the set $\{z_n\colon n\Ge 1\} \cup \{- \bar
z_n\colon n\Ge 1\}$ is the smallest subset of
$\zcal_p$ that contains $\supp \mu$ and is symmetric
with respect to the reflection across the line $x=0$.
It follows from \eqref{ne2} and \eqref{ne3} that
   \begin{align*}
\pi_2(\supp\mu) = \{\IM(z_n)\colon n \Ge 1\} \subset
\Big[0,\frac{b}{a}\Big].
   \end{align*}
Since, by \eqref{ne1}, $\IM(z_n) \searrow 0$, we see
that $\overline{\pi_2(\supp\mu)}= \{0\} \cup
\{\IM(z_n)\colon n \Ge 1\}$. This implies that
   \begin{align} \label{ne4}
\zcal_p \cap \big(\rbb \times
\overline{\pi_2(\supp\mu)}\,\big) = \{z_n\colon n\Ge
1\} \cup \{- \bar z_n\colon n\Ge 1\}.
   \end{align}
Let $\tilde \mu$ be any representing measure for
$\gammab$. In view of \eqref{ne4} and Proposition
\ref{fontan2}, the measure $\tilde \mu$ satisfies
\eqref{ne6}. It follows from Lemma \ref{abol} and
Proposition \ref{fontan} that $\tilde\mu \circ
\pi_2^{-1} = \mu \circ \pi_2^{-1}$. Since $\mu$ and
$\tilde \mu$ are supported in $\zcal_p$ and
   \begin{align*}
\zcal_p \cap \pi_2^{-1}(\{\IM(z_n)\})=\{z_n,- \bar
z_n\}, \quad n\Ge 1,
   \end{align*}
we conclude that \eqref{ne5} holds. This completes the
proof.
   \end{proof}
   \begin{rem}
We conclude this section by examining Borel
injectivity of $\psi_p$ after transformation by
polynomial automorphism. For simplicity of
presentation we treat $\cbb$ as $\rbb^2$. Let us
consider a polynomial automorphism\footnote{\;See
\cite{vdE} for fundamentals of the theory of
polynomial automorphisms.} $\varPhi\colon \rbb^2 \to
\rbb^2$ given by
   \begin{align*}
\varPhi(x,y) = (x, y + f(x)), \quad x,y \in \rbb,
   \end{align*}
where $f\in \rbb[x]$. Clearly, the inverse of
$\varPhi$ is given by
   \begin{align*}
\varPhi^{-1}(x,y) = (x, y - f(x)), \quad x,y \in \rbb.
   \end{align*}
Though polynomial automorphisms preserve many
properties of moment sequences (see e.g., \cite[Sec.\
21]{st-sz1} or \cite[Proposition 46]{c-s-s-ap}), they
fail to preserve injectivity of $\psi_p$. Indeed, if
$p(x,y)= y - 1$ for $x,y\in \rbb$, then $\psi_p$ is
injective (see Example \ref{wiele}\ding{192}). Note
that $\varPhi(\zcal_p) = \zcal_{p\circ \varPhi^{-1}}$
and $p\circ \varPhi^{-1}(x,y)= y-f(x)-1$ for $x,y\in
\rbb$. Let $f(x)=x^2$ for $x\in \rbb$. Then
$\varPhi(\zcal_p)$ is the parabola $y=x^2 + 1$, which
means that $\psi_{p\circ \varPhi^{-1}}$ is not
injective. \hfill$\triangleleft$
   \end{rem}
   \section{\label{Sec6}An open problem}
The following question, partially answered in Theorem
\ref{null} and Corollary \ref{nullab}, needs to be
solved in full generality.
   \begin{opqq}
Assume that $\gammab\colon \nfr \to \cbb$ is a complex
moment sequence such that $\PDE(\gammab) =
\{\Gammab\}$. Does it follow that $\gammab$ is
determinate?
   \end{opqq}
In view of this question it is legitimate to make sure
that none of the examples given in this paper solves
it in the negative.
   \begin{rem} \label{reM-2}
The sequence $\gammab$ from Example \ref{no-atom} is
indeterminate and, by Theorem \ref{0notatom}(v) and
Example \ref{wiele}\ding{192}, the set $\PDE(\gammab)$
is infinite.

We will show that the same conclusion holds for the
indeterminate complex moment sequence
$\gammab=\{\gamma_{m,n}\}_{(m,n)\in \nfr}$ coming from
the two-dimensional Hamburger moment sequence
$\sbold\otimes\tbold$ appearing in Theorem \ref{DC1}
if\,\footnote{\;A similar argument can be applied to
the case when $\supp\mu$ is compact.}
   \begin{align} \label{doinf}
\text{$d\okr \sup\supp\mu \in (0,\infty)$ and
$\supp\mu \subset [0,d]$,}
   \end{align}
where $\mu$ is as in Theorem \ref{DC1}. To be more
precise, the complex moment sequence $\gammab$ is
defined by (cf.\ \eqref{baba})
   \begin{align*}
\gamma_{m,n} = \int_{\rbb^2} (x+\I y)^m (x-\I y)^n
\D\varrho(x,y), \quad m,n \Ge 0,
   \end{align*}
where $\varrho$ is any representing measure for
$\sbold\otimes\tbold$. The definition of $\gammab$ is
independent of the choice of $\varrho$, and, after
identifying $\cbb$ with $\rbb^2$, representing
measures of $\sbold\otimes\tbold$ and $\gammab$
coincide (see \cite[Appendix A]{c-s-s-rh}). We will
indicate two representing measures $\varrho_1$ and
$\varrho_2$ for $\gammab$ (equivalently for
$\sbold\otimes\tbold$) such that
   \begin{align}\label{nopis}
\varrho_1\circ\psi^{-1} \neq \varrho_2\circ\psi^{-1}.
   \end{align}
For this, we first observe that if $\varDelta\subset
(0,\pi)$ is an open interval and
   \begin{align*}
E_\varDelta \okr \{\pm r\E^{\I t}\colon r>0, t\in
\varDelta\},
   \end{align*}
then
   \begin{align} \label{nopis2}
E_\varDelta=\psi^{-1}(\{\E^{2\I t}\colon
t\in\varDelta\}).
   \end{align}
Next, we notice that it is possible to find two
representing measures $\nu_1$ and $\nu_2$ for $\tbold$
for which there exist $b_1,b_2\in\rbb$ such that
   \begin{align} \label{bbbb}
\text{$b_2>b_1>0$, $b_1 \in \supp \nu_1$ and
$\nu_2((0,b_2))=0$.}
   \end{align}
Indeed, this is always true for any two distinct
N-extremal measures of $\tbold$ having atoms in
$(0,\infty)$ (up to rearrangement), because supports
of N-extremal measures of $\tbold$ form the partition
of $\rbb$ and each of them has no accumulation points
in $\rbb$ (see \cite[Theorem 2.13]{Sh-Tam}). Set
$\varrho_1 = \mu\otimes\nu_1$ and $\varrho_2 =
\mu\otimes\nu_2$. Then $\varrho_1$and $\varrho_2$ are
representing measures for $\gammab$ (see the proof of
Proposition \ref{DC1}). Moreover, by \eqref{doinf},
\eqref{bbbb} and $\mu(\nul)=0$, we have
   \begin{align} \label{varom}
\varrho_2(\varOmega)=0,
   \end{align}
where
   \begin{align*}
\varOmega = \Big((-\infty, 0]\times \rbb\Big)\cup
\Big([0,d]\times (0,b_2)\Big) \cup \Big(
(d,\infty)\times \rbb\Big),
   \end{align*}
and
   \begin{align} \label{dbjed}
(d,b_1)\in \supp \mu \times \supp \nu_1 =
\supp\varrho_1.
   \end{align}
Plainly, we can choose a set $E_\varDelta$, where
$\varDelta\subset (0,\pi)$ is an open interval, so
that $E_\varDelta \subset \varOmega$ and $(d,b_1)\in
E_\varDelta$. This combined with \eqref{varom},
\eqref{dbjed} and the fact that $E_\varDelta$ is an
open neighbourhood of $(d,b_1)$ implies that
$\varrho_2(E_\varDelta)= 0$ and
$\varrho_1(E_\varDelta)> 0$. Hence, by \eqref{nopis2},
we get \eqref{nopis}. Since, by Theorem \ref{DC1},
none of representing measures of $\gammab$ has an atom
at $0$, we infer from Lemma \ref{repr2}(ii) that
$(\varrho_1,0)$ and $(\varrho_2,0)$ are representing
pairs for some extensions in $\PDE(\gammab)$. It
follows from Lemma \ref{chmura2} and \eqref{nopis}
that they cannot be representing pairs for the same
$\Gammab\in\PDE(\gammab)$. This implies that the set
$\PDE(\gammab)$ is infinite (see the proof of
Proposition \ref{snu2}).
   \hfill$\triangleleft$
   \end{rem}

\vspace{1ex}

{\bf Acknowledgments.} The authors are grateful to the
referee for suggestions that helped to improve the
final version of the paper.

   \bibliographystyle{amsalpha}
   
   \end{document}